\newtheorem{theorem}{Theorem}
\newtheorem{proposition}[theorem]{Proposition}
\newenvironment{proof}[1][Proof]{\noindent\textbf{#1.} }{\ \rule{0.5em}{0.5em}}
\begin{document}

\begin{center}
{\Large Bi-Hamiltonian Structure of Gradient Systems in Three \ }

{\Large Dimensions and Geometry of Potential Surfaces }

\bigskip

E. Abado\u{g}lu, \ 

Department of Mathematics, Yeditepe University,

34755 Ata\c{s}ehir, \.{I}stanbul, Turkey

eabadoglu@yeditepe.edu.tr\bigskip

H. G\"{u}mral\footnote{%
On leave of absence from Department of Mathematics, Yeditepe University, Ata%
\c{s}ehir, Istanbul.}

Department of Mathematics, Australian College of Kuwait,

13015, West Mishref, Kuwait
\end{center}

\bigskip

\textbf{Abstract: }

Working bi-Hamiltonian structure and Jacobi identity in Frenet-Serret frame
associated to a dynamical system, we proved that all dynamical systems in
three dimensions possess two compatible Poisson structures. We investigate
relations between geometry of surfaces defined by potential function of a
gradient system and its bi-Hamiltonian structure. We show that it is
possible to find Hamiltonian functions whose gradient flows have geodesic
curvature zero on potential surfaces. Using this, we conclude that
Hamiltonian functions are determined by distance functions on potential
surfaces. We apply this technique to find conserved quantities of three
dimensional gradient systems including the Aristotelian model of the
three-body motion.

\section{Introduction}

A Poisson structure on a manifold is defined by a skew symmetric
contravariant bilinear form subjected to the Jacobi identity expressed as
the vanishing of the Schouten bracket of Poisson tensor with itself \cite%
{lich77}-\cite{mr94}. This structure having no non-degeneracy requirement
becomes the basic underlying geometry to study non-canonical Hamilton's
equations on odd dimensional manifolds as well as the Hamiltonian structures
of nonlinear evolution equations \cite{olver}-\cite{morrison}.

The first interesting case of a completely degenerate finite dimensional
Hamiltonian structure occurs in three dimensions. Many works have been
devoted to the study of three dimensional dynamical systems with primary
concern on quantization, construction of conserved quantities, Hamiltonian
structures, integrability problems and their numerical integration using
techniques from various areas such as Poisson geometry, differential
equations, Frobenius integrability theorem and theory of foliations \cite%
{nambu}-\cite{ben}.

In \cite{eh09}, we reduced the problem of constructing Hamiltonian
structures in three dimensions to the solutions of a Riccati equation in
moving coordinates of Frenet-Serret frame. All known examples of dynamical
systems having two compatible and explicit Hamiltonian structures are
exhausted by constant solution. We concluded that in three dimensions vector
fields which are not eigenvectors of the curl operator are at least locally
bi-Hamiltonian. This structure manifests itself in the well-known form of
Frenet-Serret triad $\mathbf{t}=\mathbf{n}\times \mathbf{b}$ where $\mathbf{t%
}$ is the unit tangent vector associated with the given dynamical system
and, the normal vectors $\mathbf{n}$ and $\mathbf{b}$ are related to
conserved covariants. From the expression of gradient operator in
Frenet-Serret frame (c.f. Eq.(\ref{e17})) we observe that unit tangent
vector arises from the Cartesian gradient of a function of arclenght
variable.

In this work, we shall continue to investigate the local structure of
bi-Hamiltonian systems in three dimensions assuming that they are described
by gradient vector fields. We shall relate ingredients of bi-Hamiltonian
structure to geometry of surfaces described by the potential function $F$.
In particular, we shall prove that there are two Hamiltonian functions
related with geodesic coordinates on the potential surfaces.

\subsection{Content of the work}

In the next section, we shall first develop differential calculus in
Frenet-Serret frame that will be used throughout the paper. Then, we shall
survey on conditions for construction of Frenet-Serret frame for a given
vector field in $%
\mathbb{R}
^{3}$ to ensure its existence. This will extend our previous result in \cite%
{eh09} that excludes eigenvectors of curl operator.

In Section 3, we shall summarize ingredients of bi-Hamiltonian systems in
three dimensions. In particular, we shall identify a Poisson bi-vector with
a locally integrable (in the sense of Frobenius) vector field in three
dimensions and, will work with the latter. We shall first reduce the Jacobi
identity into Riccati equation and then, assuming we have independent
solutions, we shall exhibit relations between conserved Hamiltonians and
Poisson vectors.

In Section 4, we shall start with geometric characterization of potential
surfaces in the normal coordinates of the Frenet-Serret frame. By
considering gradient flows of Hamiltonian functions on potential surfaces,
we shall prove that it is possible to find Hamiltonian functions whose
gradient flows on the potential surface have geodesic curvature zero. Using
this result, we shall conclude that Hamiltonian functions are determined by
distance functions on potential surfaces. Finally, we shall prove that
Hamiltonian functions of the gradient system are related with geodesic
coordinates of the potential surfaces.

In Sections 5, we shall present examples of gradient dynamical systems which
are bi-Hamiltonian and, work out in details the geometry of potential
surface.

\section{Frenet-Serret Frame}

Let $\mathbf{t(}x,y,z\mathbf{)}$ be a given unit vector field in $%
\mathbb{R}
^{3}$ endowed with Cartesian coordinates $\mathbf{x=(}x,y,z\mathbf{)}$. We
may assume $\mathbf{t}$ to be a unit tangent vector to a curve $t\rightarrow 
\mathbf{x}(t)$ in $%
\mathbb{R}
^{3}$. This may be the solution of dynamical system in Eq.(\ref{e1}). In
this case, we can choose $\mathbf{t}=\mathbf{v}/||\mathbf{v}||$. Locally,
one can always lift $\mathbf{t}$ to an orthonormal frame in $%
\mathbb{R}
^{3}$ in infinitely many ways. Let $\left( \mathbf{t},\mathbf{n},\mathbf{b}%
\right) $ be such an arbitrary orthonormal frame satisfying%
\begin{equation*}
\mathbf{t}=\mathbf{n}\times \mathbf{b},\text{ \ \ }\mathbf{n=b\times t},%
\text{ \ \ }\mathbf{b=t}\times \mathbf{n}.
\end{equation*}%
We introduce the directional derivatives along the triad $\left( \mathbf{t},%
\mathbf{n},\mathbf{b}\right) $ as 
\begin{equation}
\begin{array}{ccccc}
\partial _{s}=\mathbf{t}\cdot \nabla & \quad & \partial _{n}=\mathbf{n}\cdot
\nabla & \quad & \partial _{b}=\mathbf{b}\cdot \nabla%
\end{array}
\label{eq2}
\end{equation}%
so that the variables $(s,n,b)$ are the coordinates associated with the
Frenet-Serret frame. Assuming the Cartesian coordinates are functions $%
\mathbf{x=x}(s,n.b)$ of Frenet-Serret coordinates we find, using Eq.(\ref%
{eq2}) the Jacobian matrix%
\begin{equation*}
\frac{\partial (x,y,z)}{\partial (s,n,b)}=\left\vert 
\begin{array}{ccc}
\partial _{s}x & \partial _{n}x & \partial _{b}x \\ 
\partial _{s}y & \partial _{n}y & \partial _{b}y \\ 
\partial _{s}z & \partial _{n}z & \partial _{b}z%
\end{array}%
\right\vert =\left\vert 
\begin{array}{ccc}
\mathbf{t} & \mathbf{n} & \mathbf{b}%
\end{array}%
\right\vert
\end{equation*}%
whose determinant 
\begin{equation*}
\det \left( \frac{\partial (x,y,z)}{\partial (s,n,b)}\right) =\mathbf{t\cdot 
}(\mathbf{n\times b})=1
\end{equation*}%
is non-zero and hence the inverse transformation%
\begin{equation}
s=s(\mathbf{x}),\text{ \ }n=n(\mathbf{x}),\text{ \ }b=b(\mathbf{x})
\label{coordinates}
\end{equation}%
exists locally, that is, in a sufficiently small neighborhood of a given
point $\mathbf{x}_{0}\in 
\mathbb{R}
^{3}$. These functions may be obtained by integrating the quantities%
\begin{equation*}
ds=\mathbf{t}\cdot d\mathbf{x},\text{ \ \ \ \ \ \ }dn=\mathbf{n}\cdot d%
\mathbf{x},\text{ \ \ \ \ \ \ }db=\mathbf{b}\cdot d\mathbf{x}
\end{equation*}%
the last two of which implies $n=$constant and $b=$constant when restricted
to the curve $\mathbf{x(}t\mathbf{)}$.

\subsection{Differential calculus}

By inverting equations (\ref{eq2}) we get the expression

\begin{equation}
\nabla =\mathbf{t}\partial _{s}+\mathbf{n}\partial _{n}+\mathbf{b}\partial
_{b}  \label{e17}
\end{equation}%
for the Cartesian gradient in Frenet-Serret frame. For future reference, we
define the helicities \cite{eh09}%
\begin{equation}
\mathcal{H}_{t}=\mathbf{t}\cdot \nabla \times \mathbf{t}\text{, \ \ }%
\mathcal{H}_{n}=\mathbf{n}\cdot \nabla \times \mathbf{n}\text{, \ \ }%
\mathcal{H}_{b}=\mathbf{b}\cdot \nabla \times \mathbf{b}  \label{hel}
\end{equation}%
and the cross-helicities%
\begin{eqnarray}
\mathcal{H}_{tn} &=&\mathbf{t}\cdot \nabla \times \mathbf{n}\text{, \ \ }%
\mathcal{H}_{nt}=\mathbf{n}\cdot \nabla \times \mathbf{t}\text{, \ \ }%
\mathcal{H}_{nb}=\mathbf{n}\cdot \nabla \times \mathbf{b}  \label{cross} \\
\mathcal{H}_{tb} &=&\mathbf{t}\cdot \nabla \times \mathbf{b}\text{, \ \ }%
\mathcal{H}_{bt}=\mathbf{b}\cdot \nabla \times \mathbf{t}\text{, \ \ }%
\mathcal{H}_{bn}=\mathbf{b}\cdot \nabla \times \mathbf{n.}  \notag
\end{eqnarray}%
which measure holonomicity of Frenet-Serret triad. From the coefficients of
the basis vectors in expansions of curls into $\left( \mathbf{t},\mathbf{n},%
\mathbf{b}\right) $ we obtain%
\begin{eqnarray}
\nabla \times \mathbf{t} &\mathbf{=}&\mathcal{H}_{t}\mathbf{t}+\mathcal{H}%
_{nt}\mathbf{n}+\mathcal{H}_{bt}\mathbf{b}  \notag \\
\nabla \times \mathbf{n} &\mathbf{=}&\mathcal{H}_{tn}\mathbf{t}+\mathcal{H}%
_{n}\mathbf{n}+\mathcal{H}_{bn}\mathbf{b}  \label{curls} \\
\nabla \times \mathbf{b} &\mathbf{=}&\mathcal{H}_{tb}\mathbf{t}+\mathcal{H}%
_{nb}\mathbf{n}+\mathcal{H}_{b}\mathbf{b}  \notag
\end{eqnarray}%
and from the orthonormality of basis vectors we have the divergences%
\begin{eqnarray}
\nabla \cdot \mathbf{t} &\mathbf{=}&\mathcal{H}_{bn}-\mathcal{H}_{nb}  \notag
\\
\nabla \cdot \mathbf{n} &\mathbf{=}&\mathcal{H}_{tb}-\mathcal{H}_{bt}
\label{divs} \\
\nabla \cdot \mathbf{b} &\mathbf{=}&\mathcal{H}_{nt}-\mathcal{H}_{tn}. 
\notag
\end{eqnarray}%
Using the vector identity 
\begin{equation}
\nabla (\mathbf{u}\cdot \mathbf{v})=(\mathbf{u}\cdot \nabla )\mathbf{v}+(%
\mathbf{v}\cdot \nabla )\mathbf{u+u}\times (\nabla \times \mathbf{v)+v}%
\times (\nabla \times \mathbf{u)}  \label{iddot}
\end{equation}%
we compute the derivative of $\mathbf{t}$ in direction of $\mathbf{t}$ to be 
\begin{equation*}
\partial _{s}\mathbf{t}\mathbf{=(\mathbf{t}\cdot \nabla )t=-t}\times (\nabla
\times \mathbf{t)=-}(\mathbf{n\times b})\times (\nabla \times \mathbf{t)=n}%
\mathcal{H}_{bt}-\mathbf{b}\mathcal{H}_{nt}\text{.}
\end{equation*}%
Repeating for the other directions we have the derivatives%
\begin{eqnarray}
\partial _{s}\mathbf{t} &=&\mathbf{n}\mathcal{H}_{bt}-\mathbf{b}\mathcal{H}%
_{nt}  \notag \\
\partial _{n}\mathbf{n} &=&\mathbf{b}\mathcal{H}_{tn}-\mathbf{t}\mathcal{H}%
_{bn}  \label{alongitself} \\
\partial _{b}\mathbf{b} &=&\mathbf{t}\mathcal{H}_{nb}-\mathbf{n}\mathcal{H}%
_{tb}  \notag
\end{eqnarray}%
of basis vectors along their respective directions. To find other
derivatives, add the identity 
\begin{equation*}
\nabla \times (\mathbf{u}\times \mathbf{v})=(\mathbf{v}\cdot \nabla )\mathbf{%
u}-(\mathbf{u}\cdot \nabla )\mathbf{v}+(\nabla \cdot \mathbf{v})\mathbf{u}%
-(\nabla \cdot \mathbf{u})\mathbf{v}
\end{equation*}%
to Eq.(\ref{iddot}) to obtain%
\begin{eqnarray*}
2(\mathbf{v}\cdot \nabla )\mathbf{u} &=&\nabla (\mathbf{u}\cdot \mathbf{v}%
)+\nabla \times (\mathbf{u}\times \mathbf{v})-(\nabla \cdot \mathbf{v})%
\mathbf{u}+(\nabla \cdot \mathbf{u})\mathbf{v} \\
&&\mathbf{-u}\times (\nabla \times \mathbf{v)-v}\times (\nabla \times 
\mathbf{u)}
\end{eqnarray*}%
and let $\mathbf{u}=\mathbf{t}$, $\mathbf{v}=\mathbf{n}$ to have%
\begin{eqnarray*}
2(\mathbf{n}\cdot \nabla )\mathbf{t} &=&2\mathbf{\partial }_{n}\mathbf{t} \\
&=&\nabla \times \mathbf{b}-(\nabla \cdot \mathbf{n})\mathbf{t}+(\nabla
\cdot \mathbf{t})\mathbf{n-t}\times (\nabla \times \mathbf{n)-n}\times
(\nabla \times \mathbf{t).}
\end{eqnarray*}%
Last two terms may be expressed as%
\begin{eqnarray*}
\mathbf{t}\times (\nabla \times \mathbf{n}) &=&(\mathbf{n\times b})\times
\left( \nabla \times \mathbf{n}\right) \\
&\mathbf{=}&\mathcal{H}_{n}\mathbf{b}-\mathcal{H}_{bn}\mathbf{n} \\
\mathbf{n}\times (\nabla \times \mathbf{t}) &=&(\mathbf{b\times t})\times
(\nabla \times \mathbf{t)} \\
&\mathbf{=}&\mathcal{H}_{bt}\mathbf{t}-\mathcal{H}_{t}\mathbf{b.}
\end{eqnarray*}%
Collecting these for $\mathbf{\partial }_{n}\mathbf{t}$ and repeating
similar computations for other derivatives we obtain%
\begin{eqnarray}
\mathbf{\partial }_{n}\mathbf{t} &=&\mathcal{H}_{bn}\mathbf{n}+\frac{1}{2}(%
\mathcal{H}_{t}-\mathcal{H}_{n}+\mathcal{H}_{b})\mathbf{b}  \notag \\
\mathbf{\partial }_{b}\mathbf{t} &=&-\frac{1}{2}(\mathcal{H}_{t}+\mathcal{H}%
_{n}-\mathcal{H}_{b})\mathbf{n}-\mathcal{H}_{nb}\mathbf{b}  \notag \\
\mathbf{\partial }_{t}\mathbf{n} &=&\mathbf{-}\mathcal{H}_{bt}\mathbf{t}+%
\frac{1}{2}(\mathcal{H}_{t}-\mathcal{H}_{n}-\mathcal{H}_{b})\mathbf{b}
\label{alongothers} \\
\mathbf{\partial }_{b}\mathbf{n} &=&\frac{1}{2}(\mathcal{H}_{t}+\mathcal{H}%
_{n}-\mathcal{H}_{b})\mathbf{t}+\mathcal{H}_{tb}\mathbf{b}  \notag \\
\mathbf{\partial }_{t}\mathbf{b} &=&\mathcal{H}_{nt}\mathbf{t}-\frac{1}{2}(%
\mathcal{H}_{t}-\mathcal{H}_{n}-\mathcal{H}_{b})\mathbf{n}  \notag \\
\mathbf{\partial }_{n}\mathbf{b} &=&-\frac{1}{2}(\mathcal{H}_{t}-\mathcal{H}%
_{n}+\mathcal{H}_{b})\mathbf{t}+\mathcal{H}_{tn}\mathbf{n.}  \notag
\end{eqnarray}

\subsection{Constructing Frenet-Serret frame}

The choice of orthonormal frame is determined by the choice of $\mathbf{n}$.
In \cite{eh09}, we introduced such a frame assuming that the unit tangent is
not an eigenvector of the curl operator. Here, we want to release this
assumption and prove the existence of an orthonormal frame for all smooth
dynamical systems in three dimensions. Our result will rely on the
eigenvalue problem 
\begin{equation}
\nabla \times \mathbf{t}=\lambda (\mathbf{x})\mathbf{t}  \label{eve}
\end{equation}%
for the curl operator. If $\mathbf{t}$ is not an eigenvector of the curl
operator we have $\left( \nabla \times \mathbf{t}\right) \times \mathbf{t}%
\neq \mathbf{0}$ and we recover the result of \cite{eh09}. If however, the
eigenvalue equation (\ref{eve}) holds, then $\mathcal{H}_{t}=\mathbf{t}\cdot
\nabla \times \mathbf{t=}\lambda (\mathbf{x})$. At each point $\mathbf{x}$,
the eigenvalue $\lambda (\mathbf{x})$ will define a surface with normal $%
\nabla \mathcal{H}_{t}(\mathbf{x})$ if $\mathcal{H}_{t}(\mathbf{x})$ is not
a constant function. We distinguish two cases depending on whether the unit
tangent has components lying on this eigensurface or not. If it has, then we
choose the normal on the eigensurface. If $\mathbf{t}$ is completely aligned
with the surface normal, then we recall a result of Chandrasekhar and
Kendall in \cite{CK} that there exist a constant unit vector defining an
eigenvector of the curl operator and construct a frame with this constant
unit vector. In the remaining case with $\lambda (\mathbf{x})=0$, we have a
surface whose gradient is the unit tangent and we choose the frame using
lines of curvature of this surface. More precisely, following result proves
that there are canonical liftings to Frenet-Serret frames.

\begin{proposition}
Given a nonzero vector field $\mathbf{v}\in 
\mathbb{R}
^{3},$ let $\mathbf{t}=\mathbf{v}/||\mathbf{v}||$. Then, the vector field $%
\mathbf{n}$ can be chosen as follows

\textbf{1.} If $\left( \nabla \times \mathbf{t}\right) \times \mathbf{t}\neq 
\mathbf{0}$ then let 
\begin{equation}
\mathbf{n}=\frac{\left( \nabla \times \mathbf{t}\right) \times \mathbf{t}}{%
||\left( \nabla \times \mathbf{t}\right) \times \mathbf{t}||}  \label{e21a}
\end{equation}%
and we have necessarily $\mathcal{H}_{nt}=\mathbf{n}\cdot \nabla \times 
\mathbf{t}=0$.

\textbf{2.} If $\left( \nabla \times \mathbf{t}\right) \times \mathbf{t}=%
\mathbf{0}$, then $\nabla \times \mathbf{t}\mathbf{=}\mathcal{H}_{t}\mathbf{t%
}$ and we have necessarily $\mathcal{H}_{nt}=\mathcal{H}_{bt}=0$, or
equivalently $\partial _{s}\mathbf{t}=\mathbf{0}$. We distinguish two cases:

\textbf{2a.} if $\ \nabla \mathcal{H}_{t}\times \mathbf{t}\neq \mathbf{0}$
then choose 
\begin{equation}
\mathbf{n}=\frac{\nabla \mathcal{H}_{t}\times \mathbf{t}}{\mathbf{||}\nabla 
\mathcal{H}_{t}\times \mathbf{\mathbf{t}||}}\mathbf{.}  \label{e21b}
\end{equation}

\textbf{2bi.} if $\nabla \mathcal{H}_{t}\times \mathbf{t}=\mathbf{0}$ and $%
\mathcal{H}_{t}=$ constant $\neq 0$\textbf{,} then there exists a constant
unit vector $\mathbf{a}$ such that%
\begin{equation}
\mathbf{n}=\frac{\mathbf{a}\times \mathbf{t}}{||\mathbf{a}\times \mathbf{t}||%
}  \label{e21c}
\end{equation}%
and hence $\mathcal{H}_{nt}=\mathcal{H}_{bt}=0$.

\textbf{2bii. }if $\nabla \mathcal{H}_{t}\times \mathbf{t}=\mathbf{0}$ and $%
\mathcal{H}_{t}=$ $0$, then there exists a surface with normal $\mathbf{t}$
and the Frenet-Serret frame is the Darboux frame of the lines of curvature
on this surface. We have $\mathcal{H}_{nt}=\mathcal{H}_{bt}=0.$
\end{proposition}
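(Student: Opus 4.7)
The plan is to dispose of the four cases in turn by verifying in each (i) that the prescription for $\mathbf{n}$ yields a unit vector orthogonal to $\mathbf{t}$, so that $(\mathbf{t},\mathbf{n},\mathbf{b}:=\mathbf{t}\times\mathbf{n})$ is a right-handed orthonormal triad on a neighborhood of $\mathbf{x}_0$, and (ii) that the asserted helicity conditions follow from the case hypotheses. Case 1 was already covered in \cite{eh09}: the vector $(\nabla\times\mathbf{t})\times\mathbf{t}$ is orthogonal to both of its factors, so (\ref{e21a}) is a legitimate unit normal, and since $\mathbf{n}$ is parallel to this cross product we obtain $\mathcal{H}_{nt}=\mathbf{n}\cdot(\nabla\times\mathbf{t})=0$ immediately.

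For case 2, the hypothesis $(\nabla\times\mathbf{t})\times\mathbf{t}=\mathbf{0}$ forces $\nabla\times\mathbf{t}\parallel\mathbf{t}$, and comparison with the expansion (\ref{curls}) identifies the proportionality factor as $\mathcal{H}_t$. Consequently $\mathcal{H}_{nt}=\mathcal{H}_{bt}=0$ for \emph{any} orthonormal completion, and the first line of (\ref{alongitself}) then yields $\partial_s\mathbf{t}=\mathbf{0}$. In subcase 2a the numerator of (\ref{e21b}) is orthogonal to $\mathbf{t}$ by construction and nonzero by hypothesis, so this reduces to a routine verification. In subcase 2bii, the hypothesis $\mathcal{H}_t=0$ combined with $\nabla\times\mathbf{t}=\mathcal{H}_t\mathbf{t}$ gives $\nabla\times\mathbf{t}=\mathbf{0}$, so locally $\mathbf{t}=\nabla\phi$ for some scalar $\phi$ satisfying the eikonal equation $\|\nabla\phi\|=1$; the regular level sets of $\phi$ are surfaces with unit normal $\mathbf{t}$, and taking $\mathbf{n},\mathbf{b}$ along their principal directions supplies precisely the Darboux frame asserted.

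The delicate step is subcase 2bi, where $\mathbf{t}$ satisfies the force-free equation $\nabla\times\mathbf{t}=\mathcal{H}_t\mathbf{t}$ with $\mathcal{H}_t$ a nonzero constant (note that here the condition $\nabla\mathcal{H}_t\times\mathbf{t}=\mathbf{0}$ is automatic because $\mathcal{H}_t$ is constant). Here I would invoke the Chandrasekhar--Kendall representation of such eigenfields \cite{CK}, in which an arbitrary constant unit vector $\mathbf{a}$ serves as a pivot direction in the construction. I would then use the freedom in the choice of $\mathbf{a}$ to guarantee $\mathbf{a}\times\mathbf{t}(\mathbf{x})\neq\mathbf{0}$ on a neighborhood of $\mathbf{x}_0$; this is a local matter, since any $\mathbf{a}$ not parallel to $\mathbf{t}(\mathbf{x}_0)$ remains transverse to $\mathbf{t}$ in a neighborhood by continuity. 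Then (\ref{e21c}) defines a unit vector orthogonal to $\mathbf{t}$, and $\mathcal{H}_{nt}=\mathcal{H}_{bt}=0$ again follows from $\nabla\times\mathbf{t}\parallel\mathbf{t}$. The main obstacle I anticipate is conceptual rather than computational: one must be careful to extract from \cite{CK} only the local existence of a transverse constant direction $\mathbf{a}$ rather than any global force-free decomposition, so that the construction remains valid on the same neighborhood where the inverse coordinate transformation (\ref{coordinates}) is defined.
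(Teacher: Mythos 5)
Your proposal is correct and takes essentially the same route as the paper's proof: a case-by-case verification in which $\nabla\times\mathbf{t}=\mathcal{H}_{t}\mathbf{t}$ forces $\mathcal{H}_{nt}=\mathcal{H}_{bt}=0$ throughout case 2, the Chandrasekhar--Kendall constant vector $\mathbf{a}$ is invoked in case 2bi, and the local potential with its level surfaces and Darboux frame of lines of curvature handles case 2bii. Your observation that $\mathbf{a}$ should be chosen transverse to $\mathbf{t}(\mathbf{x}_{0})$ so that Eq.~(\ref{e21c}) is well defined on a neighborhood is a minor refinement of a point the paper leaves implicit, not a different argument.
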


\begin{proof}
Case 1. follows easily. For case 2., assume $\left( \nabla \times \mathbf{t}%
\right) \times \mathbf{t}=\mathbf{0}$. Then, $\nabla \times \mathbf{t}$ is
proportional to $\mathbf{t}$, $\nabla \times \mathbf{t}\mathbf{=}\mathcal{H}%
_{t}\mathbf{t}$ and we have $\mathcal{H}_{nt}=\mathcal{H}_{bt}=0$ which are
the coefficients of unit vectors in the expression for $\partial _{s}\mathbf{%
t}$. In order to construct the normal vector $\mathbf{n}$ we have two
subcases depending on whether $\nabla \mathcal{H}_{t}$ is non-zero and
parallel to $\mathbf{t}$. If $\nabla \mathcal{H}_{t}\times \mathbf{t}\neq 
\mathbf{0}$ then define the unit normal as in Eq.(\ref{e21b}). If $\nabla 
\mathcal{H}_{t}\times \mathbf{t}=\mathbf{0}$ we have $\nabla \mathcal{H}_{t}$
proportional to $\mathbf{t}$. If the proportionality function is zero, we
have $\mathcal{H}_{t}=$constant. In this case, $\mathbf{t}$ is an
eigenvector of the curl operator with constant eigenvalue $\mathcal{H}_{t}$.
By a result obtained in \cite{CK} there exists a scalar function $\psi $
satisfying 
\begin{equation}
\bigtriangleup \psi +\mathcal{H}_{t}^{2}\psi =0  \label{e30}
\end{equation}%
and a constant vector $\mathbf{a}$ such that the eigenvector $\mathbf{t}$
can be expressed as 
\begin{equation}
\mathbf{t}=\frac{1}{\mathcal{H}_{t}}\nabla \times \left( \psi \mathbf{a}%
+\nabla \times \left( \psi \mathbf{a}\right) \right) .  \label{e31}
\end{equation}%
Then, we define the normal vector by Eq. (\ref{e21c}). It follows that 
\begin{eqnarray*}
\mathcal{H}_{nt} &=&\mathbf{n}\cdot \nabla \times \mathbf{t}=\frac{\mathbf{a}%
\times \mathbf{t}}{||\mathbf{a}\times \mathbf{t}||}\cdot \mathcal{H}_{t}%
\mathbf{t}=0 \\
\mathcal{H}_{bt} &=&\mathbf{b}\cdot \nabla \times \mathbf{t}=\mathbf{(t}%
\times \mathbf{n)}\cdot \mathcal{H}_{t}\mathbf{t}=0.
\end{eqnarray*}%
If the proportionality function is non-zero, taking curl and then dot
product with $\mathbf{t}$ we get $\mathcal{H}_{t}=0$ which is the
integrability condition for the unit tangent vector. Since, this is a
subcase of 2 with $\nabla \times \mathbf{t}=\mathcal{H}_{t}\mathbf{t}$, we
have $\nabla \times \mathbf{t}=\mathbf{0}$. Locally, there exists a function 
$F\left( \mathbf{x}\right) $ such that $\mathbf{t}=\nabla F\left( \mathbf{x}%
\right) $. Choose $\mathbf{n}$ to be the unit tangent vector of the line of
curvature of this surface, namely, a vector $\mathbf{n}$ satisfying 
\begin{equation}
\mathbf{n}\cdot \left( \mathbf{t}\times \left( \mathbf{n}\cdot \nabla
\right) \mathbf{t}\right) =0
\end{equation}%
which implies, from Eq.(\ref{alongothers}) $\mathcal{H}_{t}=0$, $\mathcal{H}%
_{n}=\mathcal{H}_{b}$. Moreover, we have $\mathcal{H}_{nt}=\mathcal{H}%
_{bt}=0 $ as in the previous case.
\end{proof}

\section{Bi-Hamiltonian Structure}

We shall summarize the necessary ingredients of the bi-Hamiltonian formalism
in three dimensions. See \cite{nambu}-\cite{ben} for details and examples.
For $\mathbf{x=}\left\{ x^{i}\right\} =(x,y,z)\in 
\mathbb{R}
^{3}$, $t\in 
\mathbb{R}
$ and overdot denoting the derivative with respect to $t$, we consider the
system of autonomous differential equations%
\begin{equation}
\frac{d\mathbf{x}}{dt}=\mathbf{v}\left( \mathbf{x}\right)  \label{e1}
\end{equation}%
associated with a three-dimensional smooth vector field $\mathbf{v}.$ Eq.(%
\ref{e1}) is said to be Hamiltonian if the right hand side can be written as 
$\mathbf{v}\left( \mathbf{x}\right) =\Omega \left( \mathbf{x}\right) \left(
dH\left( \mathbf{x}\right) \right) $ where $H\left( \mathbf{x}\right) $ is
the Hamiltonian function and $\Omega \left( \mathbf{x}\right) $ is the
Poisson bi-vector (i.e. a skew-symmetric, contravariant two-tensor)
subjected to the Jacobi identity $\left[ \Omega \left( \mathbf{x}\right)
,\Omega \left( \mathbf{x}\right) \right] =0$ defined by the Schouten bracket 
\cite{lich77}. In coordinates, if $\partial _{i}=\partial /\partial x^{i}$,
the Poisson bi-vector is $\Omega \left( \mathbf{x}\right) =\Omega
^{jk}\left( \mathbf{x}\right) \partial _{j}\wedge \partial _{k}$, with
summation over repeated indices, and the Jacobi identity reads $\Omega
^{i[j}\partial _{i}\Omega ^{kl]}=0$ where $[jkl]$ denotes the
antisymmetrization over three indices. It follows that in three dimensions
the Jacobi identity is a single scalar equation. One can exploit vector
calculus and differential forms in three dimensions to have a more
transparent understanding of Hamilton`s equations as well as the Jacobi
identity.\ Using the isomorphism 
\begin{equation}
J_{i}=\varepsilon _{ijk}\Omega ^{jk}\qquad i,j,k=1,2,3  \label{e4}
\end{equation}%
between skew-symmetric matrices and (pseudo)-vectors defined by the
completely antisymmetric Levi-Civita tensor $\varepsilon _{ijk}$, we can
write the Hamilton's equations and the Jacobi identity as 
\begin{equation}
\mathbf{v}=\mathbf{J}\times \nabla H\text{ \ \ \ \ \ \ \ }\mathbf{J}\cdot
\left( \nabla \times \mathbf{J}\right) =0,  \label{e5}
\end{equation}%
respectively. In this form the Jacobi identity is equivalent to the
Frobenius integrability condition $J\wedge dJ=0$ for the one form $%
J=J_{i}dx^{i}$. It is the condition for $J$ to define a foliation of
codimension one in three dimensional space \cite{hasan},\cite{arn}-\cite%
{reinhart}. A distinguished property of Poisson structures in three
dimensions is the invariance of the Jacobi identity under the multiplication
of Poisson vector $\mathbf{J}\left( \mathbf{x}\right) $ by an arbitrary but
non-zero factor. The identities

\begin{equation}
\mathbf{J}\cdot \mathbf{v}=0,\text{ \ \ \ }\nabla H\cdot \mathbf{v}=0
\label{e11}
\end{equation}%
follows directly from the Hamilton's equations in $\left( \ref{e5}\right) $.
The second equation in (\ref{e11}) is the expression for the conservation of
Hamiltonian function. A three dimensional vector field $\mathbf{v}\left( 
\mathbf{x}\right) $ is said to be bi-Hamiltonian if there exist two
different compatible Hamiltonian structures \cite{olver},\cite{magri78}. In
the notation of equation $\left( \ref{e5}\right) $, this implies%
\begin{equation}
\mathbf{v}=\mathbf{J}_{1}\times \nabla H_{2}=\mathbf{J}_{2}\times \nabla
H_{1}  \label{e11a}
\end{equation}%
for the dynamical equations. The compatibility condition for $\mathbf{J}_{1}$
and $\mathbf{J}_{2}$ is defined by the Jacobi identity for the Poisson
pencil $\mathbf{J}_{1}+c\mathbf{J}_{2}$ for arbitrary constant $c.$

\subsection{Jacobi identity in Frenet-Serret frame}

Given a vector field $\mathbf{v}$, $\mathbf{v}/\left\Vert \mathbf{v}%
\right\Vert $ is the unit tangent vector $\mathbf{t}$ to the flow of $%
\mathbf{v}$. It follows from the identity $\mathbf{J}\cdot \mathbf{v}=0$
that the Poisson vector $\mathbf{J}$ has no component along the unit tangent
vector $\mathbf{t}.$ Hence, we set 
\begin{equation}
\mathbf{J}=A\mathbf{n}+B\mathbf{b}  \label{e26}
\end{equation}%
for unknown functions $A\left( \mathbf{x}\right) $ and $B\left( \mathbf{x}%
\right) $ satisfying $A^{2}+B^{2}\neq 0$. Assuming $A\neq 0$ and defining
the function $\mu =B/A$ the Jacobi identity for $\mathbf{J=}A(\mathbf{n}+\mu 
\mathbf{b)}$ reduces to the Riccati equation%
\begin{equation}
\partial _{s}\mu =\mathcal{H}_{n}+\mu (\mathcal{H}_{nb}+\mathcal{H}%
_{bn})+\mu ^{2}\mathcal{H}_{b}  \label{ricca}
\end{equation}%
and 
\begin{equation}
\partial _{s}\ln A=\partial _{s}\ln \left\Vert \mathbf{v}\right\Vert -\mu 
\mathcal{H}_{b}-\mathcal{H}_{nb}  \label{factor}
\end{equation}%
in the arclenght variable $s$. The Riccati equation (\ref{ricca}) is
equivalent to a linear second order equation and hence possesses two
linearly independent solutions leading to two Poisson vectors for dynamical
system under consideration. The Hamiltonian form of dynamical equations
implies that the Poisson vectors obtained from solutions of Riccati equation
are always compatible. Thus, we conclude that

\begin{proposition}
\label{mm}All dynamical systems in three dimensions possess two compatible
Poisson vectors.
\end{proposition}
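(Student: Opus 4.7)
My plan is to promote the reduction already sketched in the text into a full existence argument by combining the Frenet--Serret construction with elementary ODE theory. The preceding Proposition guarantees that, for any smooth nonzero $\mathbf{v}$, the triad $(\mathbf{t},\mathbf{n},\mathbf{b})$ with $\mathbf{t}=\mathbf{v}/\|\mathbf{v}\|$ exists in a neighborhood of any chosen point, which is all that is needed for a local bi-Hamiltonian statement.

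First I would use the identity $\mathbf{J}\cdot\mathbf{v}=0$ to confine $\mathbf{J}$ to the normal plane and write $\mathbf{J}=A(\mathbf{n}+\mu\mathbf{b})$. Expanding the scalar Jacobi identity $\mathbf{J}\cdot(\nabla\times\mathbf{J})=0$ with the curl formulas (\ref{curls}) and the derivative formulas (\ref{alongitself})--(\ref{alongothers}) separates into an $\mathbf{n}$-component and a $\mathbf{b}$-component: the first is the Riccati equation (\ref{ricca}) for $\mu$ along the arclength parameter $s$, and the second is the first-order linear equation (\ref{factor}) for $\ln A$. Here the transverse coordinates $(n,b)$ act as passive parameters.

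The central step is the classical linearization of (\ref{ricca}). Assuming $\mathcal{H}_{b}\not\equiv 0$, the substitution
\[
\mu=-\frac{\partial_{s}w}{\mathcal{H}_{b}\,w}
\]
turns (\ref{ricca}) into a linear second order ODE in $s$ for $w$, whose solution space on any interval of smoothness is two-dimensional. Two linearly independent solutions $w_{1},w_{2}$ give two Riccati solutions $\mu_{1},\mu_{2}$, and integrating (\ref{factor}) against each $\mu_{i}$ produces amplitudes $A_{i}$ and Poisson vectors $\mathbf{J}_{i}=A_{i}(\mathbf{n}+\mu_{i}\mathbf{b})$. The degenerate branches ($\mathcal{H}_{b}\equiv 0$ or $A\equiv 0$) are handled separately: the Riccati equation becomes linear, or the ansatz reduces to $\mathbf{J}=B\mathbf{b}$, and a second independent Poisson vector is read off directly.

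The main obstacle, and the place where the Hamiltonian character really enters, is the \emph{compatibility} of the two structures, namely the vanishing of the cross-term $\mathbf{J}_{1}\cdot(\nabla\times\mathbf{J}_{2})+\mathbf{J}_{2}\cdot(\nabla\times\mathbf{J}_{1})$ needed for Jacobi to hold for the whole pencil $\mathbf{J}_{1}+c\mathbf{J}_{2}$. My plan is to observe that equation (\ref{factor}) forces $A_{i}$ to be proportional to $w_{i}$ up to a common factor depending only on $(n,b)$, so the pencil itself has the form $\tilde A(\mathbf{n}+\tilde\mu\mathbf{b})$ with
\[
\tilde A\propto w_{1}+c\,w_{2},\qquad \tilde\mu=-\frac{\partial_{s}(w_{1}+c\,w_{2})}{\mathcal{H}_{b}(w_{1}+c\,w_{2})}.
\]
Since $w_{1}+c\,w_{2}$ is again a solution of the linearized ODE, $\tilde\mu$ satisfies (\ref{ricca}) and $\tilde A$ satisfies (\ref{factor}) for every $c$, so the pencil is Poisson. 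This reduces compatibility to the linearity of the auxiliary second order equation and closes the argument.
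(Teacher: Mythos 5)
Your proposal is correct, and its skeleton --- confining $\mathbf{J}$ to the normal plane via $\mathbf{J}\cdot\mathbf{v}=0$, reducing the Jacobi identity in the Frenet--Serret frame to the Riccati equation (\ref{ricca}) together with Eq.(\ref{factor}) for the conformal factor, and invoking the two-dimensional solution space of the linearized equation --- is exactly the paper's. Where you genuinely depart is the compatibility step: the paper disposes of it in one sentence by appealing to the Hamiltonian form of the equations (in effect, once $\mathbf{J}_1=\phi\nabla H_1$ and $\mathbf{J}_2=-\phi\nabla H_2$ as in Eq.(\ref{e158}), the cross term $\mathbf{J}_1\cdot\nabla\times\mathbf{J}_2+\mathbf{J}_2\cdot\nabla\times\mathbf{J}_1$ vanishes by antisymmetry of the scalar triple product), whereas you prove it intrinsically: Eq.(\ref{factor}) forces $A_i$ to be a multiple of $w_i$ by a factor common to $i=1,2$, so the pencil $\mathbf{J}_1+c\mathbf{J}_2$ is again of the form $\tilde A(\mathbf{n}+\tilde\mu\mathbf{b})$ with $\tilde\mu$ built from the superposed solution $w_1+c\,w_2$, hence again a solution of (\ref{ricca}). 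Your route buys self-containedness: it does not presuppose the Hamiltonians $H_i$, which the paper constructs only in the following proposition, so it removes an apparent circularity, while the paper's one-liner is shorter and ties compatibility directly to the bi-Hamiltonian form (\ref{e11a}). Two minor points to tidy: the common factor you extract is $\Vert\mathbf{v}\Vert\, e^{-\int\mathcal{H}_{nb}\,ds}$, which depends on $s$ as well as $(n,b)$ --- harmless, because the Jacobi identity in three dimensions is conformally invariant and the residual $(n,b)$-dependent integration functions can be absorbed into $w_1,w_2$ --- and at points where $w_1+c\,w_2=0$ the quotient $\tilde\mu$ is singular while the pencil itself is smooth, so the Jacobi identity there should be obtained by continuity. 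You are also more careful than the paper about the degenerate branches $\mathcal{H}_b\equiv 0$ and $A=0$, which the text simply assumes away.
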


\subsection{Poisson vectors and conserved quantities}

Once we have the independent solutions $\mu _{1}$ and $\mu _{2}$ of the
Riccati equation, we can form the compatible Poisson vectors 
\begin{equation}
\mathbf{J}_{1}\mathbf{=}A_{1}(\mathbf{n}+\mu _{1}\mathbf{b)}\text{, \ \ \ \
\ \ \ }\mathbf{J}_{2}\mathbf{=}A_{2}(\mathbf{n}+\mu _{2}\mathbf{b)}
\label{j1j2}
\end{equation}%
with conformal factors $A_{1}$ and $A_{2}$. The construction of
corresponding Hamiltonian functions to form a bi-Hamiltonian pair requires
integration of these Poisson vectors.

\begin{proposition}
The conserved covariants for $\mathbf{J}_{1}$ and $\mathbf{J}_{2}$ are%
\begin{equation}
\nabla H_{1}=\frac{||\mathbf{v}||}{A_{1}A_{2}(\mu _{2}-\mu _{1})}\mathbf{J}%
_{1}\text{, \ \ \ }\nabla H_{2}=\frac{-||\mathbf{v}||}{A_{1}A_{2}(\mu
_{2}-\mu _{1})}\mathbf{J}_{2},  \label{coco}
\end{equation}%
respectively.
\end{proposition}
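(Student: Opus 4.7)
The plan is to express $\nabla H_1$ in the Frenet--Serret frame, use the conservation of $H_1$ to kill its tangential component, and then pin down the two normal-plane components from the bi-Hamiltonian equation $\mathbf{v} = \mathbf{J}_2\times\nabla H_1$ together with the alignment of $\nabla H_1$ along $\mathbf{J}_1$. From (\ref{e11}) we have $\nabla H_1 \cdot \mathbf{v} = 0$, so writing $\nabla H_1 = \beta\mathbf{n} + \gamma\mathbf{b}$ has no $\mathbf{t}$-component. Substituting into $\mathbf{v} = \mathbf{J}_2\times\nabla H_1$ with $\mathbf{J}_2 = A_2(\mathbf{n}+\mu_2\mathbf{b})$ and using $\mathbf{n}\times\mathbf{b} = \mathbf{t}$, the cross product collapses to the single scalar relation $A_2(\gamma-\mu_2\beta) = ||\mathbf{v}||$, which is one equation in two unknowns and so leaves one degree of freedom.

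The decisive step is to argue that the direction of $\nabla H_1$ in the normal plane coincides with that of $\mathbf{J}_1$, namely $\gamma = \mu_1\beta$. Since $\mathbf{J}_1$ is Poisson, the Jacobi identity $\mathbf{J}_1\cdot(\nabla\times\mathbf{J}_1)=0$ from (\ref{e5}) is precisely the Frobenius integrability of the 1-form dual to $\mathbf{J}_1$, so locally $\mathbf{J}_1 = f_1\nabla K_1$ for a Casimir $K_1$. In the canonical bi-Hamiltonian pair of Proposition \ref{mm}, the Hamiltonian $H_1$ paired with $\mathbf{J}_2$ coincides (up to a function) with the Casimir of $\mathbf{J}_1$, so $\nabla H_1 \parallel \nabla K_1 \parallel \mathbf{J}_1$. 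Substituting $\gamma = \mu_1\beta$ into $A_2(\gamma-\mu_2\beta) = ||\mathbf{v}||$ solves $\beta = ||\mathbf{v}||/[A_2(\mu_1-\mu_2)]$, and collecting terms yields $\nabla H_1 = (\beta/A_1)\mathbf{J}_1$, matching the stated formula up to the sign convention fixed by the ordering of $\mu_1$ and $\mu_2$. The companion expression for $\nabla H_2$ follows by the symmetric argument applied to $\mathbf{v} = \mathbf{J}_1\times\nabla H_2$; the antisymmetry of the cross product under $\mathbf{J}_1\leftrightarrow\mathbf{J}_2$ produces the opposite overall sign.

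The principal obstacle is making the alignment $\nabla H_1\parallel\mathbf{J}_1$ rigorous rather than citing it as the canonical identification. A self-contained route is to use that the orbits of $\mathbf{v}$ lie in the intersection of the level sets of the two Casimirs $K_1,K_2$ produced by Frobenius applied to each Poisson vector, so every conserved quantity of $\mathbf{v}$ is a function of $K_1$ and $K_2$; writing $H_1 = h(K_1,K_2)$ and plugging into $\mathbf{v} = \mathbf{J}_2\times\nabla H_1$ shows that the term $\partial_{K_2}h\,\nabla K_2$ is annihilated by $\mathbf{J}_2\propto\nabla K_2$, so this freedom can be absorbed into a redefinition of $H_1$ and we may take $H_1 = H_1(K_1)$. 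This collapses the off-diagonal component of $\nabla H_1$ in the $\{\mathbf{J}_1,\mathbf{J}_2\}$ basis of the normal plane, and the scalar equation from step one then fixes the proportionality factor exactly as displayed.
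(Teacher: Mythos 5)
Your proposal is correct in outline and reaches the stated formula by the same overall strategy as the paper---write $\nabla H_1$ with no $\mathbf{t}$-component, let the bi-Hamiltonian equation $\mathbf{v}=\mathbf{J}_2\times\nabla H_1$ fix one scalar, and treat the remaining freedom as gauge---but your justification of the decisive alignment step is genuinely different. The paper expands $\nabla H_1=a\mathbf{J}_1+b\mathbf{J}_2$, $\nabla H_2=c\mathbf{J}_1+d\mathbf{J}_2$, reads one coefficient in each expansion from Eq.(\ref{e11a}) and orthonormality, and then disposes of the leftover coefficients by dotting $\nabla\times\nabla H_i\equiv 0$ with $\mathbf{J}_1,\mathbf{J}_2$ and invoking compatibility, declaring them to parametrize an equivalence class of Hamiltonians. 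You instead integrate each Poisson vector via Frobenius, $\mathbf{J}_i=f_i\nabla K_i$, note that every conserved quantity is locally a function of $(K_1,K_2)$, and observe that the $K_2$-dependence of $H_1$ is annihilated in $\mathbf{J}_2\times\nabla H_1$, so it is exactly the additive gauge freedom; this makes the paper's ``equivalence class'' remark concrete and is, if anything, more transparent. Two caveats keep you at parity with, rather than ahead of, the paper: (i) ``absorb the freedom and take $H_1=H_1(K_1)$'' tacitly requires that the coefficient fixed by the dynamics, $\partial_{K_1}h=\|\mathbf{v}\|f_1/\bigl(A_1A_2(\mu_1-\mu_2)\bigr)$, depend on $K_1$ alone (equivalently, that $h$ be additively separable); this is not automatic and is really secured by the residual freedom in the conformal factors $A_i$ (only $\partial_s\ln A_i$ is prescribed by Eq.(\ref{factor})), a point the paper also glosses with its ``without restriction to generality''; (ii) your computation as set up actually yields $\nabla H_1=\|\mathbf{v}\|\,\mathbf{J}_1/\bigl(A_1A_2(\mu_1-\mu_2)\bigr)$ and $\nabla H_2=\|\mathbf{v}\|\,\mathbf{J}_2/\bigl(A_1A_2(\mu_2-\mu_1)\bigr)$, the negatives of Eq.(\ref{coco}); the relative sign between the two covariants is what matters and you get it right, the overall sign being a convention discrepancy already present in the paper between Eq.(\ref{e11a}) and Eqs.(\ref{coco}), (\ref{e158}), so your remark about the ordering of $\mu_1,\mu_2$ is acceptable.
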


\begin{proof}
By definition, the conserved Hamiltonians satisfy $\mathbf{v\cdot }\nabla
H_{1}=\mathbf{v\cdot }\nabla H_{1}=0$. That means, the gradients lie on the
space spanned by $\{\mathbf{n}_{1}\mathbf{,n}_{2}\}$ or, equivalently, by $\{%
\mathbf{J}_{1}\mathbf{,J}_{2}\}$. We, thus, form the linear combinations%
\begin{equation*}
\nabla H_{1}=a\mathbf{J}_{1}\mathbf{+}b\mathbf{J}_{2}\text{, \ \ }\nabla
H_{2}=c\mathbf{J}_{1}\mathbf{+}d\mathbf{J}_{2}
\end{equation*}%
and determine the coefficients. Bi-Hamiltonian form in Eq.(\ref{e11a})
together with the orthonormality of the basis $\{\mathbf{t},\mathbf{n,b}\}$
imply 
\begin{equation*}
b=-c=\frac{||\mathbf{v}||}{A_{1}A_{2}(\mu _{2}-\mu _{1})}.
\end{equation*}%
The identities $\nabla \times \nabla H_{1}=\nabla \times \nabla H_{2}\equiv
0 $ dotted with $\mathbf{J}_{1}$\textbf{\ }and $\mathbf{J}_{2}$ result,
recalling also the compatibility condition, in the fact that $a$ and $d$ are
also multiples of this function. It turns out that, for each Hamiltonian,
there is an equivalence class of Hamiltonian functions whose gradients
differ by the functions $a$ and $d$. Hence, without restriction to
generality, the conserved covariants are given by Eq.(\ref{coco}).
\end{proof}

\section{Gradient Systems}

We assume that there exists a potential function $F$ for the velocity field 
\begin{equation}
\frac{d\mathbf{x}}{dt}=\mathbf{v}\left( \mathbf{x}\right) =\nabla F\left( 
\mathbf{x}\right)  \label{gradsys}
\end{equation}%
of a given dynamical system in three dimensions. In this case, we have $%
\nabla \times \mathbf{v}\left( \mathbf{x}\right) =\mathbf{0}$, so that 
\begin{equation}
\nabla \times \mathbf{t}=\mathbf{t}\times \nabla \ln \left\Vert \mathbf{v}%
\right\Vert  \label{crost}
\end{equation}%
and hence, $\mathbf{t}\cdot \nabla \times \mathbf{t}=0$. Then, either $%
(\nabla \times \mathbf{t})\times \mathbf{t\neq 0}$ or $\nabla \times \mathbf{%
t=0.}$ In the first case we take $\mathbf{n}=((\nabla \times \mathbf{t}%
)\times \mathbf{t)/||}(\nabla \times \mathbf{t})\times \mathbf{t||}$ with $%
\mathbf{n}\cdot \nabla \times \mathbf{t}=0$ as well. Thus, the Frenet-Serret
frame for a gradient system with potential function $F$ may consists of unit
vector fields%
\begin{equation}
\mathbf{t}=\frac{\mathbf{v}}{||\mathbf{v||}}=\frac{\nabla F}{||\nabla F||}%
\text{, \ \ }\mathbf{n=}\frac{(\nabla \times \mathbf{t})\times \mathbf{t}}{%
\mathbf{||}\nabla \times \mathbf{t||}}\text{, \ \ }\mathbf{b}=\frac{\nabla
\times \mathbf{t}}{\mathbf{||}\nabla \times \mathbf{t||}}.  \label{fs}
\end{equation}%
In the second case, there exists a surface with normal $\mathbf{t,}$ which
is nothing but any potential surface of $F\left( \mathbf{x}\right) $ and the
Frenet-Serret frame is the Darboux frame of lines of curvature on the
potential surface. In other words, $\mathbf{t}$ will be unit normal of the
potential surfaces and choosing the Frenet-Serret frame as the Darboux frame
of the lines of curvature on the potential surface will work in both cases.

Now, since $\left\{ \mathbf{t,n,b}\right\} $ forms an orthonormal frame, the
arclengths of their integral curves $\left( s\mathbf{,}n,b\right) $ form a
local coordinate system around any point $p\in 
\mathbb{R}
^{3\text{ }}$provided that $\mathbf{v}\left( p\right) \neq \mathbf{0}.$ On
the other hand, 
\begin{equation}
||\mathbf{v}||\mathbf{t}=\nabla F\left( \mathbf{x}\right)  \label{potsur1}
\end{equation}%
implies that 
\begin{equation}
\begin{array}{ccccc}
\mathbf{t\cdot }\nabla F\left( \mathbf{x}\right) & = & \partial _{s}F\left(
s,n,b\right) & = & ||\mathbf{v||} \\ 
\mathbf{n\cdot }\nabla F\left( \mathbf{x}\right) & = & \partial _{n}F\left(
s,n,b\right) & = & 0 \\ 
\mathbf{b\cdot }\nabla F\left( \mathbf{x}\right) & = & \partial _{b}F\left(
s,n,b\right) & = & 0%
\end{array}
\label{potsur2}
\end{equation}%
Hence $F\left( \mathbf{x}\right) =F\left( s\right) $ in the coordinate
system defined above. Therefore, with the assumption that $\mathbf{v}\left(
p\right) =\nabla F\left( p\right) \neq \mathbf{0}$ the potential surfaces
are 
\begin{equation}
F\left( \mathbf{x}\right) =F\left( s\right) =c^{\prime }  \label{potsur}
\end{equation}%
which implies $s=c$. As $s$ is the arclength of integral curve of (\ref%
{gradsys}), its flow $\Phi _{\sigma }$ acts on the $s$ coordinate and
therefore on the potential surfaces by translation, i.e. $\Phi _{\sigma
}\left( s,n,b\right) =\left( s+\sigma ,n,b\right) $. If $H\left( \mathbf{x}%
\right) =H\left( s,n,b\right) $ is a Hamiltonian function for (\ref{gradsys}%
), then it will be invariant by this flow $H\left( s+\sigma ,n,b\right)
=H\left( s,n,b\right) ,$ which implies that $H$ is independent of $s,$ in
other words fixing $s=c$ and letting $\sigma =s$ lead to 
\begin{equation}
H\left( s,n,b\right) =H\left( s,n,b\right) \mid _{s=c}.  \label{extham}
\end{equation}

The invariance of Hamiltonians under the flow implies that it is sufficient
to determine the Hamiltonian function on any potential surface. Namely, a
Hamiltonian function can be reduced to a Hamiltonian function on potential
surfaces. However, the converse may not be so straightforward. For the
construction of a Hamiltonian function out of a function defined on
potential surfaces, or equivalently, to extend a gradient vector $\nabla H$
on the potential surface to a gradient vector on $%
\mathbb{R}
^{3},$ the vectors $\mathbf{t}$ and $\mathbf{t}\times \nabla H,$ which are
perpendicular to $\nabla H,$ must be tangent to a surface in $%
\mathbb{R}
^{3}.$ This condition can be written as 
\begin{equation}
\left[ \alpha \mathbf{t\cdot }\nabla \mathbf{,}\left( \beta \mathbf{t}\times
\nabla H\right) \cdot \nabla \right] =0  \label{extcon}
\end{equation}%
for some functions $\alpha $ and $\beta $ on $%
\mathbb{R}
^{3},$ where $\left[ \cdot ,\cdot \right] $ denotes the bracket of vector
fields. Our purpose is to prove that

\begin{theorem}
The Hamiltonian functions of a gradient system defined by a potential
function $F$ are determined by geodesic distance functions defined by
non-conjugate points on the potential surfaces.
\end{theorem}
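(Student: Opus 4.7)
The plan is to reduce the problem to a two-dimensional question on a single potential surface and then invoke the theory of geodesic parallel coordinates. Since a Hamiltonian $H$ is $s$-independent by (\ref{extham}), its Cartesian gradient (\ref{e17}) reduces to $\nabla H = \partial_n H\,\mathbf{n} + \partial_b H\,\mathbf{b}$, which is tangent to each potential surface $\Sigma_{c'} = \{F = c'\}$ whose unit normal is $\mathbf{t}$. Hence $\nabla H$ coincides with the intrinsic gradient of $H$ on $\Sigma_{c'}$ taken in the metric induced from $\mathbb{R}^3$, so that determining $H$ on $\mathbb{R}^3$ is equivalent to determining its restriction to $\Sigma_{c'}$.

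Next, I would invoke the preceding assertion of Section~4 that, for the gradient system (\ref{gradsys}), the Hamiltonian functions can be chosen so that their gradient trajectories on $\Sigma_{c'}$ have vanishing geodesic curvature. The integral curves of this restricted gradient field then form an orthogonal congruence of unparametrized geodesics of the induced metric, perpendicular to the level curves of $H$ in $\Sigma_{c'}$.

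The central step is to identify $H$ with a geodesic distance. Fix a regular value $h_0$ and a base point $p_0$ on the level curve $C_0 = \{H = h_0\} \cap \Sigma_{c'}$. Provided $p_0$ is non-conjugate to the nearby points along the orthogonal geodesic field (so that the exponential map in the direction normal to $C_0$ is a local diffeomorphism and no caustic intervenes), the signed geodesic distance
\[
\rho(q) = \mathrm{dist}_{\Sigma_{c'}}(C_0, q)
\]
is smooth in a neighborhood of $p_0$, satisfies $|\nabla \rho| = 1$ on $\Sigma_{c'}$, and its gradient is tangent to the same geodesic congruence as the intrinsic gradient of $H$. The level sets of $H$ and $\rho$ therefore coincide, forcing $H = h(\rho)$ for some one-variable function $h$. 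Lifting trivially back to $\mathbb{R}^3$ by $s$-invariance and repeating the argument for the second member $H_2$ of the bi-Hamiltonian pair (whose own gradient-geodesic congruence is furnished by Proposition~\ref{mm}) produces a second distance function based at a different non-conjugate point, and the theorem follows.

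The main obstacle I anticipate is controlling the non-conjugacy hypothesis itself: one must guarantee that the gradient-geodesic congruence produced in the second step admits a genuine geodesic parallel coordinate system in a neighborhood of $p_0$, i.e., that the orthogonal geodesics do not focus before reaching the target point. This is precisely the non-conjugacy condition under which $\rho$ is smooth and single-valued, and it delineates the region in which the identification $H = h(\rho)$ is valid.
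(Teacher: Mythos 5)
Your reduction to a single potential surface via the $s$-independence (\ref{extham}) and your use of geodesic parallel coordinates are in the spirit of the paper's argument (the paper's own route is the pair of metric transformations (\ref{e177})--(\ref{e178}) producing geodesic coordinates), but as a proof of the theorem the proposal has two genuine gaps. First, your central identification gives $H=h(\rho)$ where $\rho$ is the geodesic distance to a \emph{level curve} $C_{0}$ of $H$, and your ``non-conjugacy'' is the focal-point condition for the normal exponential map of $C_{0}$. The theorem, and the paper's proof, concern distance functions $d(\mathbf{x},\mathbf{p}_{1})$, $d(\mathbf{x},\mathbf{p}_{2})$ to two base \emph{points}, where non-conjugacy means $\mathbf{p}_{2}$ avoids the set $\Gamma(\mathbf{p}_{1})$ of intersections of geodesics through $\mathbf{p}_{1}$; the crux of the paper's argument is the proof that these two distance functions are functionally independent (if $\nabla d(\cdot,\mathbf{p}_{1})=\pm\nabla d(\cdot,\mathbf{p}_{2})$, the midpoint argument, respectively the triangle-equality argument, forces $\mathbf{p}_{2}\in\Gamma(\mathbf{p}_{1})$), after which any Hamiltonian on the surface is generated by the two distances because their gradients span the tangent plane. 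You never establish independence of your two functions, nor that they can be taken to be distances from points, so what you prove is weaker than, and different from, the stated theorem. Moreover, the vanishing of the geodesic curvature of the gradient trajectories, which you import as a black box, is itself part of what the paper must prove: it is derived from the bi-Hamiltonian structure via $\partial_{b}H_{i}=\mu_{i}\partial_{n}H_{i}$ in (\ref{e153}), whose integrability condition (\ref{e156}) is exactly $\kappa_{g}=0$.

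Second, the extension to $\mathbb{R}^{3}$ is not ``trivial by $s$-invariance.'' Declaring $H$ independent of $s$ tacitly treats $(s,n,b)$ as commuting coordinates adapted to the frame, but $(\mathbf{t},\mathbf{n},\mathbf{b})$ is anholonomic in general, and the issue is whether the plane spanned by $\mathbf{t}$ and $\nabla H_{i}\times\mathbf{t}$ integrates to a surface of $\mathbb{R}^{3}$ (the level surface of the extended Hamiltonian containing the flow lines of (\ref{gradsys})). The paper devotes its final proposition to exactly this point, proving the Frobenius condition (\ref{holonomy}) by combining the Riccati equation, the equations for the conformal factors $A_{i}$, and the property $\mathcal{H}_{nt}=0$ guaranteed by the frame construction for gradient systems, and then exhibiting the integrating factor $\alpha=1/\Vert\mathbf{v}\Vert$. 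Without an argument of this kind your lift is unsupported; at minimum you should either reproduce such a bracket computation or give the flow-out construction along $\mathbf{v}$ and verify that the resulting function is smooth and has the required gradient relation to the Poisson vectors.
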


To obtain this result, first we are going to show that, geodesic distances
on a potential surface are defined by a gradient system. Then, we will prove
that all Hamiltonian functions on potential surface are generated by
geodesic distances to non-conjugate points. Finally, we will show that these
distance functions can be extended to Hamiltonian functions on $%
\mathbb{R}
^{3}.$

\subsection{Differential geometry of potential surfaces}

We give geometric parameters of potential surface and of an arbitrary curve
on it.

\begin{proposition}
The fundamental forms of surfaces $F\left( s\right) =c$ are%
\begin{eqnarray*}
ds_{1}^{2} &=&dn^{2}+db^{2} \\
ds_{2}^{2} &=&\mathcal{H}_{bn}dn^{2}+(\mathcal{H}_{b}\mathbf{-}\mathcal{H}%
_{n})dbdn-\mathcal{H}_{nb}db^{2}.
\end{eqnarray*}%
If $\mathbf{X}\left( n\left( \sigma \right) ,b\left( \sigma \right) \right) $
is a curve, parametrized with the arclenght $\sigma $, on the potential
surface, then the normal and the geodesic curvatures are 
\begin{eqnarray}
\kappa _{n} &=&\xi ^{2}\mathcal{H}_{bn}\mathbf{+}\xi \eta (\mathcal{H}_{n}%
\mathbf{-}\mathcal{H}_{b})+\eta ^{2}\mathcal{H}_{nb}  \label{ncurv} \\
\kappa _{g} &=&\partial _{b}\xi -\partial _{n}\eta -\xi \mathcal{H}_{tn}%
\mathbf{-}\eta \mathcal{H}_{tb},  \label{gcurv}
\end{eqnarray}%
where we denote 
\begin{equation*}
dn/d\sigma =\xi ,\text{ \ \ }db/d\sigma =\eta .
\end{equation*}
\end{proposition}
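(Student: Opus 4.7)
The plan is to read each of the four formulas directly off the Frenet-Serret differential calculus of Section~2, specialized to the gradient case in which $\mathcal{H}_{t} = \mathcal{H}_{nt} = 0$ and where the potential surface is parametrized by $(n,b)\mapsto \mathbf{X}(n,b)$ with $s=c$ held fixed. The first fundamental form is immediate: the Euclidean metric on $\mathbb{R}^{3}$ pulls back, through the orthonormal Jacobian whose columns are $(\mathbf{t},\mathbf{n},\mathbf{b})$, to $ds^{2}+dn^{2}+db^{2}$, and its restriction to $s=c$ is exactly $dn^{2}+db^{2}$.

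For the second fundamental form I would fix the unit normal to the surface (with orientation chosen once and for all with respect to $\mathbf{t}$) and compute the coefficients $L=(\partial_{n}\mathbf{n})\cdot\mathbf{N}$, $M=(\partial_{b}\mathbf{n})\cdot\mathbf{N}=(\partial_{n}\mathbf{b})\cdot\mathbf{N}$, $N=(\partial_{b}\mathbf{b})\cdot\mathbf{N}$, reading the required $\mathbf{t}$-components straight out of (\ref{alongitself}) and (\ref{alongothers}). The two expressions for $M$ coincide precisely because $\mathcal{H}_{t}=0$, which is the symmetry of $II$. Substituting these coefficients yields the claimed $ds_{2}^{2}$, and then $\kappa_{n}=II(\mathbf{X}',\mathbf{X}')$ with $\mathbf{X}'=\xi\mathbf{n}+\eta\mathbf{b}$ gives the advertised quadratic form in $(\xi,\eta)$.

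The real work is the geodesic curvature. I would start from $\kappa_{g}=\mathbf{N}\cdot(\mathbf{X}'\times\mathbf{X}'')$ with $\mathbf{X}''=(\mathbf{X}'\cdot\nabla)\mathbf{X}'$ and chain-rule to obtain
\[
\mathbf{X}''=\xi'\mathbf{n}+\eta'\mathbf{b}+\xi(\xi\partial_{n}+\eta\partial_{b})\mathbf{n}+\eta(\xi\partial_{n}+\eta\partial_{b})\mathbf{b}.
\]
Only the tangent-plane components (along $\mathbf{n}$ and $\mathbf{b}$) of $\partial_{n}\mathbf{n}$, $\partial_{b}\mathbf{n}$, $\partial_{n}\mathbf{b}$, $\partial_{b}\mathbf{b}$ contribute to $\kappa_{g}$; by inspection of (\ref{alongitself}) and (\ref{alongothers}) these involve only the cross-helicities $\mathcal{H}_{tn}$ and $\mathcal{H}_{tb}$. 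Forming the cross product with $\mathbf{X}'$ and dotting with $\mathbf{N}$ produces the combination $\xi\eta'-\eta\xi'$ plus cubic monomials in $\xi,\eta$ weighted by $\mathcal{H}_{tn}$ and $\mathcal{H}_{tb}$. Finally, extending $\xi$ and $\eta$ to the components of a unit tangent vector field on the surface and using $\xi'=\xi\partial_{n}\xi+\eta\partial_{b}\xi$, $\eta'=\xi\partial_{n}\eta+\eta\partial_{b}\eta$ together with the consequences $\xi\partial\xi+\eta\partial\eta=0$ of the arclength identity $\xi^{2}+\eta^{2}=1$, the quantity $\xi\eta'-\eta\xi'$ collapses to the planar curl $\partial_{b}\xi-\partial_{n}\eta$ up to sign.

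The main obstacle is the bookkeeping in this final step: the unit-length constraint has to be invoked systematically to reduce the cubic $\mathcal{H}_{tn}$- and $\mathcal{H}_{tb}$-monomials to their linear counterparts $-\xi\mathcal{H}_{tn}$ and $-\eta\mathcal{H}_{tb}$, and the orientation of $\mathbf{N}$ must be chosen consistently so that the same sign convention governs $ds_{2}^{2}$, $\kappa_{n}$, and $\kappa_{g}$. A further subtlety is the implicit extension of $(\xi,\eta)$ from data along a single curve to a unit tangent field on the surface, which is what makes $\partial_{b}\xi$ and $\partial_{n}\eta$ well defined in the final expression for $\kappa_{g}$.
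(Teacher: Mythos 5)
Your proposal is correct and follows essentially the same route as the paper's proof: the first form is read off the orthonormal frame, the second form and $\kappa _{n}$ come from the frame-derivative formulas dotted with the surface normal (your $\partial ^{2}\mathbf{X}\cdot \mathbf{N}$ versus the paper's $d\mathbf{t}\cdot d\mathbf{X}$, equivalent up to the orientation convention you rightly flag -- the paper's own signs in $ds_{2}^{2}$, Eq.~(\ref{ncurv}) and its computed curvature vector are not mutually consistent), and $\kappa _{g}$ comes from chain-ruling $\mathbf{T}=\xi \mathbf{n}+\eta \mathbf{b}$ and invoking $\xi ^{2}+\eta ^{2}=1$, exactly as in the paper's decomposition $d\mathbf{T}/d\sigma =\kappa _{n}\mathbf{t}+\kappa _{g}\left( \eta \mathbf{n}-\xi \mathbf{b}\right) $. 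One concrete caution: the collapse of the cubic $\mathcal{H}_{tn}$-monomials to $-\xi \mathcal{H}_{tn}$ requires $\mathbf{n}\cdot \partial _{n}\mathbf{b}=-\mathbf{b}\cdot \partial _{n}\mathbf{n}=-\mathcal{H}_{tn}$, as forced by orthonormality, whereas the entry for $\partial _{n}\mathbf{b}$ printed in Eq.~(\ref{alongothers}) carries the opposite sign; if you read it ``straight out'' as proposed, those terms give $\xi \left( \eta ^{2}-\xi ^{2}\right) \mathcal{H}_{tn}$ and do not reduce, so rederive that entry before the bookkeeping step.
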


\begin{proof}
Let $\mathbf{X}=\mathbf{X}(c,n,b)$ be a potential surface of the function $F$
determined by a constant $c$. The tangent vectors and the unit normal vector
of this potential surface are%
\begin{equation*}
\text{\ }\partial _{n}\mathbf{X}=\mathbf{n}\text{, \ \ \ }\partial _{b}%
\mathbf{X}=\mathbf{b}\text{, \ \ \ }\partial _{n}\mathbf{X}\times \text{\ }%
\partial _{b}\mathbf{X}=\mathbf{t.}
\end{equation*}%
The first fundamental form is obviously $ds_{1}^{2}=dn^{2}+db^{2}$. To find
the second fundamental form, first note that the normal vector $\mathbf{t}%
(c,n,b)$ of $\mathbf{X}(c,n,b)$ is independent of $s$. We compute%
\begin{eqnarray*}
d\mathbf{t\cdot }d\mathbf{X} &=&(\partial _{n}\mathbf{t}dn+\partial _{b}%
\mathbf{t}db)\cdot (\mathbf{n}dn+\mathbf{b}db) \\
&=&\mathbf{n}\cdot (\partial _{n}\mathbf{t}dn^{2}+\partial _{b}\mathbf{t}%
dbdn)+\mathbf{b}\cdot (\partial _{n}\mathbf{t}dndb+\partial _{b}\mathbf{t}%
db^{2}) \\
&=&(\mathbf{n}\cdot \partial _{b}\mathbf{t+\mathbf{b}\cdot \partial }_{n}%
\mathbf{\mathbf{t})}dbdn+\mathbf{n}\cdot \partial _{n}\mathbf{t}dn^{2}+%
\mathbf{b}\cdot \partial _{b}\mathbf{t}db^{2} \\
&=&\mathcal{H}_{bn}dn^{2}+(\mathcal{H}_{b}\mathbf{-}\mathcal{H}_{n})dbdn-%
\mathcal{H}_{nb}db^{2}
\end{eqnarray*}%
and thereby obtaining the second fundamental form. For curvatures, the unit
tangent vector of the curve $\mathbf{X}\left( n\left( \sigma \right)
,b\left( \sigma \right) \right) $ is 
\begin{equation}
\mathbf{T}=\frac{d\mathbf{X}}{d\sigma }=\frac{dn}{d\sigma }\mathbf{n}+\frac{%
db}{d\sigma }\mathbf{b}  \label{e147}
\end{equation}%
with $(dn/d\sigma )^{2}+(db/d\sigma )^{2}=1$. We compute the curvature vector%
\begin{eqnarray*}
\frac{d\mathbf{T}}{d\sigma } &=&\frac{d\xi }{d\sigma }\mathbf{n+}\xi \frac{d%
\mathbf{n}}{d\sigma }+\frac{d\eta }{d\sigma }\mathbf{b+}\eta \frac{d\mathbf{b%
}}{d\sigma }=\kappa \mathbf{N} \\
&=&(\xi \partial _{n}\xi +\eta \partial _{b}\xi )\mathbf{n}+\xi (\xi
\partial _{n}\mathbf{n}+\eta \partial _{b}\mathbf{n}) \\
&&+(\xi \partial _{n}\eta +\eta \partial _{b}\eta )\mathbf{b}+\eta (\xi
\partial _{n}\mathbf{b}+\eta \partial _{b}\mathbf{b}).
\end{eqnarray*}%
The second and the third parenthesis can be expressed, using formulas from
calculus in Frenet-Serret frame, as%
\begin{eqnarray*}
\xi \partial _{n}\mathbf{n}+\eta \partial _{b}\mathbf{n} &=&\xi (\mathcal{H}%
_{tn}\mathbf{b-}\mathcal{H}_{bn}\mathbf{t)}+\eta (-\mathcal{H}_{tn}\mathbf{n+%
}\frac{1}{2}(\mathcal{H}_{n}\mathbf{-}\mathcal{H}_{b})\mathbf{t),} \\
\xi \partial _{n}\mathbf{b}+\eta \partial _{b}\mathbf{b} &\mathbf{=}&\eta (-%
\mathcal{H}_{tb}\mathbf{n+}\mathcal{H}_{nb}\mathbf{t})+\xi (\mathcal{H}_{tb}%
\mathbf{b+}\frac{1}{2}(\mathcal{H}_{n}\mathbf{-}\mathcal{H}_{b})\mathbf{t),}
\end{eqnarray*}%
with which the curvature vector becomes 
\begin{eqnarray*}
\frac{d\mathbf{T}}{d\sigma } &=&(-\xi ^{2}\mathcal{H}_{bn}\mathbf{+}\xi \eta
(\mathcal{H}_{n}\mathbf{-}\mathcal{H}_{b})+\eta ^{2}\mathcal{H}_{nb}\mathbf{%
)t} \\
&&+(-\eta \partial _{n}\eta +\eta \partial _{b}\xi -\xi \eta \mathcal{H}_{tn}%
\mathbf{-}\eta ^{2}\mathcal{H}_{tb}\mathbf{)n} \\
&&+(\xi \partial _{n}\eta -\xi \partial _{b}\xi +\xi ^{2}\mathcal{H}_{tn}%
\mathbf{+}\xi \eta \mathcal{H}_{tb}\mathbf{)b} \\
&=&(-\xi ^{2}\mathcal{H}_{bn}\mathbf{+}\xi \eta (\mathcal{H}_{n}\mathbf{-}%
\mathcal{H}_{b})+\eta ^{2}\mathcal{H}_{nb}\mathbf{)t} \\
&&+(\partial _{b}\xi -\partial _{n}\eta -\xi \mathcal{H}_{tn}\mathbf{-}\eta 
\mathcal{H}_{tb}\mathbf{)(}\eta \mathbf{n-}\xi \mathbf{b})
\end{eqnarray*}%
where we used $\xi \partial _{n}\xi =-\eta \partial _{n}\eta $ and $\eta
\partial _{b}\eta =-\xi \partial _{b}\xi $. This gives the decomposition%
\begin{equation*}
\frac{d\mathbf{T}}{d\sigma }=\kappa _{n}\mathbf{t}+\kappa _{g}\mathbf{(}\eta 
\mathbf{n-}\xi \mathbf{b})
\end{equation*}%
of the curvature into tangential and geodesic components.
\end{proof}

A curve parametrized with arclength is a geodesic on a surface if $\kappa
_{g}=0$ for all points on the curve. We will see that this condition is
necessary for the integrability of forms forming transformations to geodesic
coordinates on the potential surface of a dynamical system. There will be
two such coordinate systems, each corresponds to construction of a
Hamiltonian function on the surface.

\subsection{Flows of Hamiltonian functions on potential surfaces and
geodesic distances}

In this section, we shall prove two propositions which relate the flows of
Hamiltonian functions on potential surfaces with geodesic distances. First
proposition tells that geodesics are integral curves of gradient flows of
geodesic distances on potential surfaces. Second proposition proves that
geodesic distances defined by non-conjugate points, i.e. points which are
joined by a unique geodesic, are functionally independent and therefore
defines two Hamiltonian functions. Hence these functions generates all
Hamiltonian functions that can be defined on the potential surface.

\begin{proposition}
Finding two Hamiltonian functions for the gradient system $\nabla F$ on the
potential surface $F=c$ amounts to finding geodesic distances on the
potential surface.
\end{proposition}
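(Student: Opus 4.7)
The plan is to establish a two-way correspondence between Hamiltonians of the 3D gradient system (when viewed as functions on a potential surface) and geodesic distance functions on that surface.

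First, I would reduce the problem to intrinsic geometry of the potential surface. By (\ref{extham}) and (\ref{potsur2}), every Hamiltonian $H$ for $\mathbf{v}=\nabla F$ satisfies $\partial_{s}H=0$, hence descends to a function $H(n,b)$ on the surface $F=c$, whose first fundamental form is the flat $ds_{1}^{2}=dn^{2}+db^{2}$ by the preceding proposition. The bi-Hamiltonian data of Section 3 — in particular (\ref{coco}) — forces $\nabla H_{i}\parallel \mathbf{J}_{i}=A_{i}(\mathbf{n}+\mu_{i}\mathbf{b})$, so on the potential surface the intrinsic gradient $\nabla_{\mathrm{surf}}H_{i}=\partial_{n}H_{i}\,\mathbf{n}+\partial_{b}H_{i}\,\mathbf{b}$ is tangent to the direction $\mathbf{n}+\mu_{i}\mathbf{b}$; equivalently, $\partial_{b}H_{i}/\partial_{n}H_{i}=\mu_{i}$.

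For the direction \textbf{Hamiltonian $\Rightarrow$ geodesic distance}, I would parametrize an integral curve of $\nabla_{\mathrm{surf}}H_{i}$ by arclength, giving unit tangent $(\xi,\eta)=(1,\mu_{i})/\sqrt{1+\mu_{i}^{2}}$, and substitute into the geodesic-curvature formula (\ref{gcurv}). The key identity to exploit is the Riccati equation (\ref{ricca}) satisfied by $\mu_{i}$, together with the derivative formulas (\ref{alongothers}) evaluated in the gradient setting where Proposition 1 (case 1) gives $\mathcal{H}_{nt}=\mathcal{H}_{bt}=0$. After these substitutions, I expect $\kappa_{g}$ to collapse identically to zero, showing that the integral curves of the intrinsic gradient are geodesics on the potential surface. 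Using the gauge freedom $H_{i}\mapsto \Phi(H_{i})$ that leaves the Hamiltonian character intact, I would then choose $\Phi$ so that $|\nabla_{\mathrm{surf}}H_{i}|\equiv 1$ — this normalization is possible precisely because $\kappa_{g}=0$ makes $|\nabla_{\mathrm{surf}}H_{i}|$ constant on level sets. The resulting $H_{i}$ measures arclength along the geodesic flow lines and is therefore a geodesic distance function from one of its level curves.

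For the converse \textbf{geodesic distance $\Rightarrow$ Hamiltonian}, given a geodesic distance $d(n,b)$ on the potential surface, I would extend it to $\mathbb{R}^{3}$ by $\tilde{d}(s,n,b):=d(n,b)$; the orthogonality $\mathbf{v}\cdot \nabla \tilde{d}=0$ is automatic since $\mathbf{v}=\|\mathbf{v}\|\mathbf{t}$ and $\partial_{s}\tilde{d}=0$. Setting $\mu:=\partial_{b}d/\partial_{n}d$ and determining the conformal factor $A$ from (\ref{factor}), I would verify that $\mu$ solves the Riccati equation (\ref{ricca}) by reversing the computation of the forward direction, with $\kappa_{g}=0$ playing the role of the integrability condition. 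Finally, Proposition \ref{mm} supplies two independent solutions $\mu_{1},\mu_{2}$, corresponding to two geodesic distance functions that serve as the two Hamiltonians of the bi-Hamiltonian pair.

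The main obstacle, I expect, is the algebraic matching in the forward direction: the coefficients appearing in $\kappa_{g}$ are the cross-helicities $\mathcal{H}_{tn},\mathcal{H}_{tb}$, while the Riccati equation involves a different set $\mathcal{H}_{n},\mathcal{H}_{b},\mathcal{H}_{nb},\mathcal{H}_{bn}$. Bridging these two families of helicities requires exploiting both the gradient structure (through $\mathcal{H}_{nt}=\mathcal{H}_{bt}=0$ and (\ref{crost})) and the fact that $\mathbf{t}$ is the actual unit normal to the potential surface. Showing that the 2D geodesic condition $\kappa_{g}=0$ on the surface is equivalent to the 3D Jacobi identity (\ref{ricca}) is the technical heart of the argument.
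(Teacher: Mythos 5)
Your forward direction rests on a claim that fails: you expect that substituting the Riccati equation (\ref{ricca}) into the geodesic-curvature formula (\ref{gcurv}) makes $\kappa_{g}$ ``collapse identically to zero'' for the gradient flow of any Hamiltonian associated with a solution $\mu_{i}$. It cannot, because the two conditions constrain different derivatives of $\mu$: the Riccati equation determines only $\partial_{s}\mu$ and leaves the dependence of $\mu$ on $(n,b)$ completely arbitrary, whereas the geodesic condition, written out in Eq.~(\ref{e156}), involves only $\partial_{n}\mu$, $\partial_{b}\mu$ and the cross-helicities $\mathcal{H}_{tn},\mathcal{H}_{tb}$. The paper states this explicitly right after Eq.~(\ref{e156}): vanishing geodesic curvature is an \emph{additional restriction} that the choice of geodesic distances imposes on the arbitrariness admitted by the Riccati equation, not a consequence of it. Indeed it could not be a consequence: any function of a conserved pair is again a Hamiltonian, and a generic such function does not have geodesic gradient flow lines on the surface, so the claimed equivalence between the 3D Jacobi identity and the 2D condition $\kappa_{g}=0$ --- which you yourself single out as the technical heart --- is false. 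Your normalization step is also circular: replacing $H_{i}$ by $\Phi(H_{i})$ to achieve $\left\Vert \nabla_{\mathrm{surf}}H_{i}\right\Vert =1$ is possible only if $\left\Vert \nabla_{\mathrm{surf}}H_{i}\right\Vert$ is already constant on level sets, which is equivalent to the geodesic property you are trying to derive. (Your converse direction is essentially sound, but it is the easy half: in three dimensions any conserved function yields an integrable, hence Poisson, vector $\phi\nabla H$, so the Riccati equation is automatic there.)

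The paper's actual argument is structured quite differently and avoids this trap. It does not try to deduce geodesy from the Jacobi identity; instead it studies transformations between two orthogonal coordinate systems $(u,v)\rightarrow(q,p)$ on the potential surface, derives the integrability conditions (\ref{e168})--(\ref{e169}) for such a transformation, identifies their characteristic curves, and then \emph{chooses} the target coordinates to be geodesic coordinates, i.e.\ metrics of the form (\ref{e177}) or (\ref{e178}) with one unit diagonal entry. In that gauge the characteristic curves have $\kappa_{g}=0$ by construction, and the transformed coordinates themselves are arclengths along geodesics, giving $H_{1}=-q=\sigma_{1}$ and $H_{2}=p=\sigma_{2}$ as in Eq.~(\ref{e183}); the existence and independence of two such distance functions is then handled separately (via Hopf--Rinow and non-conjugate base points) in the following proposition. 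To repair your write-up you would have to replace the ``identity'' $\kappa_{g}\equiv 0$ by an existence/selection argument of this kind, showing that among the solutions of (\ref{ricca}) one can pick those whose associated surface flows are geodesic.
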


\begin{proof}
Let $(u,v)$ be orthogonal coordinates on the potential surface $F\left( 
\mathbf{x}\right) =c$ for a given Riemannian metric 
\begin{equation*}
\left( g_{ij}\left( u,v\right) \right) =\left( 
\begin{array}{cc}
g_{uu}\left( u,v\right) & 0 \\ 
0 & g_{vv}\left( u,v\right)%
\end{array}%
\right)
\end{equation*}%
and let $\left( q,p\right) $ be another orthogonal coordinate system such
that 
\begin{equation*}
\left( G_{ij}\left( q,p\right) \right) =\left( 
\begin{array}{cc}
G_{qq}\left( q,p\right) & 0 \\ 
0 & G_{pp}\left( q,p\right)%
\end{array}%
\right) .
\end{equation*}%
Above two metrics are related by%
\begin{eqnarray}
g_{uu}\left( u,v\right) &=&G_{qq}\left( q,p\right) q_{u}^{2}+G_{pp}\left(
q,p\right) p_{u}^{2}  \notag \\
g_{vv}\left( u,v\right) &=&G_{qq}\left( q,p\right) q_{v}^{2}+G_{pp}\left(
q,p\right) p_{v}^{2}  \label{e166} \\
0 &=&G_{qq}\left( q,p\right) q_{u}q_{v}+G_{pp}\left( q,p\right) p_{u}p_{v} 
\notag
\end{eqnarray}%
Note that, although $(u,v)$ and $\left( q,p\right) $ are orthogonal
coordinates on the surface, the coordinate transformation between them need
not be orthogonal, that is, the Jacobian determinant%
\begin{equation}
J=q_{u}p_{v}-q_{v}p_{u}=\sqrt{\frac{g_{uu}g_{vv}}{G_{qq}G_{pp}}}
\label{e167}
\end{equation}%
is not necessarily equal to unity. Defining 
\begin{equation}
\gamma _{u}^{q}=\sqrt{\frac{g_{uu}}{G_{qq}}}\text{, \ \ }\gamma _{v}^{q}=%
\sqrt{\frac{g_{vv}}{G_{qq}}}\text{, \ \ }\gamma _{u}^{p}=\sqrt{\frac{g_{uu}}{%
G_{pp}}}\text{, \ \ }\gamma _{v}^{p}=\sqrt{\frac{g_{vv}}{G_{pp}}}
\label{e167a}
\end{equation}%
and introducing a parameter $\theta $, we can write equations $\left( \ref%
{e166}\right) $ as a system of differential equations 
\begin{eqnarray}
\frac{\partial q}{\partial u} &=&\gamma _{u}^{q}\cos \theta \text{, \ \ \ \
\ \ }\frac{\partial p}{\partial u}=\gamma _{u}^{p}\sin \theta  \label{e167c}
\\
\frac{\partial q}{\partial v} &=&-\gamma _{v}^{q}\sin \theta \text{, \ \ \ \
\ }\frac{\partial p}{\partial v}=\gamma _{v}^{p}\cos \theta .  \label{e167b}
\end{eqnarray}%
for transformations between two orthogonal coordinates. Equations $\left( %
\ref{e167c}\right) $ and $\left( \ref{e167b}\right) $ are subjected to two
integrability conditions%
\begin{eqnarray}
\gamma _{u}^{q}\sin \theta \frac{\partial \theta }{\partial v}-\gamma
_{v}^{q}\cos \theta \frac{\partial \theta }{\partial u} &=&\frac{\partial
\gamma _{u}^{q}}{\partial v}\cos \theta +\frac{\partial \gamma _{v}^{q}}{%
\partial u}\sin \theta  \label{e168} \\
\gamma _{u}^{p}\cos \theta \frac{\partial \theta }{\partial v}+\gamma
_{v}^{p}\sin \theta \frac{\partial \theta }{\partial u} &=&\frac{\partial
\gamma _{v}^{p}}{\partial u}\cos \theta -\frac{\partial \gamma _{u}^{p}}{%
\partial v}\sin \theta .  \label{e169}
\end{eqnarray}%
The characteristic curve of the first integrability condition in Eq.$\left( %
\ref{e168}\right) $ is the integral curve of the dynamical system defined by%
\begin{eqnarray}
\frac{du}{dt} &=&-\gamma _{v}^{q}\cos \theta  \notag \\
\frac{dv}{dt} &=&\gamma _{u}^{q}\sin \theta  \label{e170} \\
\frac{d\theta }{dt} &=&\frac{\partial \gamma _{u}^{q}}{\partial v}\cos
\theta +\frac{\partial \gamma _{v}^{q}}{\partial u}\sin \theta  \notag
\end{eqnarray}%
and the arclength $\sigma _{1}$ of this integral curve satisfies%
\begin{equation*}
\frac{d\sigma _{1}}{dt}=\sqrt{g_{vv}}\gamma _{u}^{q}.
\end{equation*}%
In the transformed coordinates $(q,p),$ the first two equations for the
characteristic curve become%
\begin{equation}
\frac{dq}{dt}=-\frac{1}{\sqrt{G_{qq}}}\frac{d\sigma _{1}}{dt},\text{ \ \ \ }%
\frac{dp}{dt}=0  \label{e171a}
\end{equation}%
hence, the arclength $\rho _{1}$ in the transformed coordinates satisfies%
\begin{equation*}
\frac{d\rho _{1}}{dt}=\frac{d\sigma _{1}}{dt}
\end{equation*}%
and Eq.$\left( \ref{e171a}\right) $ in arclength parametrization becomes%
\begin{equation}
\frac{d}{d\rho _{1}}\left( 
\begin{array}{c}
q \\ 
p%
\end{array}%
\right) =-\sqrt{G_{qq}}\left( 
\begin{array}{c}
1 \\ 
0%
\end{array}%
\right) .  \label{e172}
\end{equation}%
The geodesic curvature of the characteristic curve $q=c$ is given by%
\begin{equation}
\kappa _{g}=\frac{1}{2\sqrt{G_{pp}}}\frac{\partial \ln G_{qq}}{\partial q}
\label{e172a}
\end{equation}

Similarly, for the second integrability condition $\left( \ref{e169}\right) $%
, the characteristic equation is the integral curve of the dynamical system%
\begin{eqnarray}
\frac{du}{dt} &=&\gamma _{v}^{p}\sin \theta  \notag \\
\frac{dv}{dt} &=&\gamma _{u}^{p}\cos \theta  \label{e173} \\
\frac{d\theta }{dt} &=&\frac{\partial \gamma _{v}^{p}}{\partial u}\cos
\theta -\frac{\partial \gamma _{u}^{p}}{\partial v}\sin \theta  \notag
\end{eqnarray}%
and the arclength $\sigma _{2}$ of this curve is determined by%
\begin{equation*}
\frac{d\sigma _{2}}{dt}=\sqrt{g_{uu}}\gamma _{v}^{p}.
\end{equation*}%
In the transformed coordinates $(q,p),$ the first two equations for
characteristic curve are%
\begin{equation}
\frac{dq}{dt}=0,\text{ \ \ \ }\frac{dp}{dt}=\frac{1}{\sqrt{G_{pp}}}\frac{%
d\sigma _{2}}{dt}  \label{e174a}
\end{equation}%
and the arclength $\rho _{2}$ in the transformed coordinates satisfies%
\begin{equation}
\frac{d\rho _{2}}{dt}=\frac{d\sigma _{2}}{dt}.  \label{e174b}
\end{equation}%
Equation $\left( \ref{e174a}\right) $ in this parametrization becomes%
\begin{equation}
\frac{d}{d\rho _{2}}\left( 
\begin{array}{c}
q \\ 
p%
\end{array}%
\right) =-\sqrt{G_{pp}}\left( 
\begin{array}{c}
0 \\ 
1%
\end{array}%
\right) .  \label{e175a}
\end{equation}%
The geodesic curvature of the characteristic curve $p=c$ is%
\begin{equation}
\kappa _{g}=\frac{1}{2\sqrt{G_{qq}}}\frac{\partial \ln G_{pp}}{\partial p}.
\label{e176}
\end{equation}

To arrive at the conclusion of the proposition we will restrict the
characteristic curves to be geodesics. To this end, we note that we can
choose two types of geodesic coordinates on a surface. One transforming the
metric as%
\begin{equation}
\left( 
\begin{array}{cc}
g_{uu}\left( u,v\right) & 0 \\ 
0 & g_{vv}\left( u,v\right)%
\end{array}%
\right) \longrightarrow \left( 
\begin{array}{cc}
1 & 0 \\ 
0 & G_{pp}\left( q,p\right)%
\end{array}%
\right)  \label{e177}
\end{equation}%
whereas, the other transforming the metric as 
\begin{equation}
\left( 
\begin{array}{cc}
g_{uu}\left( u,v\right) & 0 \\ 
0 & g_{vv}\left( u,v\right)%
\end{array}%
\right) \longrightarrow \left( 
\begin{array}{cc}
G_{qq}\left( q,p\right) & 0 \\ 
0 & 1%
\end{array}%
\right) .  \label{e178}
\end{equation}%
The first transformation reduces the equations (\ref{e172}) and (\ref{e172a}%
) to 
\begin{equation}
\frac{d}{d\rho _{1}}\left( 
\begin{array}{c}
q \\ 
p%
\end{array}%
\right) =-\left( 
\begin{array}{c}
1 \\ 
0%
\end{array}%
\right) ,\text{ \ \ \ }\kappa _{g}=0  \label{e179}
\end{equation}%
for the characteristic curve of the first integrability condition in Eq.(\ref%
{e168}) and its geodesic curvature. Similarly, the second transformation
reduces the equations (\ref{e175a}) and (\ref{e176}) to 
\begin{equation}
\frac{d}{d\rho _{2}}\left( 
\begin{array}{c}
q \\ 
p%
\end{array}%
\right) =-\left( 
\begin{array}{c}
0 \\ 
1%
\end{array}%
\right) ,\text{ \ \ \ }\kappa _{g}=0  \label{e181}
\end{equation}%
which are associated to characteristic curve of the second integrability
condition in Eq.(\ref{e169}).

From Eq.(\ref{e171a}) $\ $and Eq.(\ref{e174a}) we have 
\begin{equation*}
\frac{dq}{dt}=-\frac{d\sigma _{1}}{dt},\ \ \ \ \frac{dp}{dt}=\frac{d\sigma
_{2}}{dt}
\end{equation*}%
and without restriction of generality, we may assume that 
\begin{equation}
H_{1}=-q=\sigma _{1}\text{ \ \ \ }H_{2}=p=\sigma _{2}  \label{e183}
\end{equation}%
which are arclengths along geodesic curves, namely geodesic distances.
\end{proof}

Next, we are going to prove that geodesic distance functions defined by
non-conjugate points are two Hamiltonian functions on the potential surfaces.

\begin{proposition}
The Hamiltonian functions of a gradient system defined by a potential
function $F$ are determined by geodesic distance functions on potential
surfaces.
\end{proposition}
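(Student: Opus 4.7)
The plan is to upgrade the preceding proposition from a statement about two specific Hamiltonian functions to a statement about the entire class of Hamiltonians. The strategy rests on the observation that any two functionally independent smooth functions on the two-dimensional potential surface $F=c$ form a local coordinate system, so every other smooth function on the surface is a smooth function of them. Non-conjugacy of a pair of base points is precisely what produces the required functional independence for two geodesic distance functions.

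First, I would invoke the reduction furnished by (\ref{extham}): every Hamiltonian function $H$ for the gradient system $\nabla F$ is independent of the arclength coordinate $s$, so $H$ is determined completely by its restriction to a single potential surface $F=c$. Conversely, any smooth function on $F=c$ extends to a conserved quantity on $\mathbb{R}^{3}$ by declaring it to be constant along the integral curves of $\nabla F$. Hence it suffices to show that every smooth function on the surface $F=c$ can be represented as a smooth function of two geodesic distance functions coming from a pair of non-conjugate points.

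Second, I would identify the candidates and check that they are Hamiltonians. Pick two non-conjugate points $p_{1},p_{2}$ on $F=c$, so that each pair of points in a suitable neighborhood is joined by a unique minimizing geodesic, and set $d_{i}(x)=d(x,p_{i})$. Geodesic polar coordinates centered at $p_{i}$ are precisely a geodesic coordinate system of the type (\ref{e177}) or (\ref{e178}), with the radial coordinate equal to $d_{i}$. By (\ref{e183}) of the preceding proposition, $d_{i}$ is a Hamiltonian function of the gradient system, and its intrinsic gradient at a regular $x$ is the unit tangent at $x$ of the unique minimizing geodesic from $p_{i}$ to $x$.

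Third, I would establish functional independence and close the argument. The intrinsic gradients $\nabla d_{1}$ and $\nabla d_{2}$ point along the unique minimizing geodesics from $x$ back to $p_{1}$ and to $p_{2}$, and they are colinear only when $x$ lies on the geodesic through $p_{1}$ and $p_{2}$ or an extension of it, a nowhere dense subset of the surface. Off this exceptional locus $(d_{1},d_{2})$ provides a local chart, so any Hamiltonian $H$ has the form $H=\Phi(d_{1},d_{2})$ for a smooth $\Phi$, and the $s$-invariant extension described in (\ref{extham}) reproduces $H$ on $\mathbb{R}^{3}$. The main obstacle I foresee is a careful handling of the cut loci of $p_{1}$ and $p_{2}$ and of the geodesic joining them, where smoothness or independence of $d_{1},d_{2}$ can fail; non-conjugacy confines these exceptional sets to a nowhere dense union of smooth curves, and to cover the entire surface one may need to overlay charts produced by several distinct pairs of non-conjugate base points and verify that the representations agree on overlaps. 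Once this bookkeeping is disposed of, the proposition reduces to the preceding proposition together with the elementary remark that any smooth function on a two-manifold is a smooth function of any two functionally independent ones.
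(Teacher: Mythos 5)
Your proposal follows essentially the same route as the paper: both fix a level surface, take as Hamiltonians the geodesic distances to two suitably chosen points (your non-conjugate pair playing the role of the paper's condition $\mathbf{p}_{2}\notin\Gamma(\mathbf{p}_{1})$), note that their gradients are unit vectors tangent to the surface, establish functional independence, and conclude that every Hamiltonian is a function of these two after reducing to the surface via the $s$-invariance of conserved quantities. The only real differences are in detail rather than strategy: the paper proves independence globally by the $\lambda=\pm 1$ dichotomy, a midpoint choice and the triangle inequality (after invoking Hopf--Rinow for existence of geodesics), whereas you argue pointwise that the two gradients can be colinear only along the single geodesic through the base points and patch local charts, with a more explicit (and welcome) acknowledgement of the cut-locus issues that the paper passes over silently.
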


\begin{proof}
Since level surfaces, $F\left( \mathbf{x}\right) =c,$ are closed subspaces
of $%
\mathbb{R}
^{3}$, which is a complete metric space, level surfaces with the induced
metric are also complete. Then, by Hopf-Rinow theorem \cite{HR} they are
also geodesically complete, which implies the existence of a geodesic
between any two points on a level surface. Choose and fix a level surface $%
F\left( \mathbf{x}\right) =c,$ which we denote by $S_{c},$ and a point $%
\mathbf{p}_{1}\in S_{c}$. Then the first Hamiltonian function on the
potential surface, $H_{1}\left( \mathbf{x}\right) ,$ can be constructed as 
\begin{equation*}
H_{1}\left( \mathbf{x}\right) =d\left( \mathbf{x},\mathbf{p}_{1}\right) =%
\text{The length of the geodesic joining }\mathbf{x\in }S_{c}\text{ to }%
\mathbf{p}_{1}.\text{ }
\end{equation*}%
Since the gradients of distance functions have unit norm, 
\begin{equation*}
\left\Vert \nabla H_{1}\left( \mathbf{x}\right) \right\Vert =\left\Vert
\nabla d\left( \mathbf{x},\mathbf{p}_{1}\right) \right\Vert =1\text{ for }%
\mathbf{x\in }S_{c}
\end{equation*}%
and belong to tangent plane of $S_{c}$ at $\mathbf{x},$ it is a Hamiltonian
function of $\left( \text{\ref{gradsys}}\right) $, whose gradient has unit
norm on the potential surface.

For the second Hamiltonian, let $\Gamma \left( \mathbf{p}_{1}\right) $ be
the set of intersection of all geodesics through $\mathbf{p}_{1}.$ Choosing
another point $\mathbf{p}_{2}$ $\notin \Gamma $ on the level surface $S_{c},$
and repeating the same construction, the second Hamiltonian function is
obtained. This second function will be independent from the first one. In
the rest of the proof we will show this. Assume, on the contrary, that the
second distance function is not independent. Then, 
\begin{equation*}
\nabla d\left( \mathbf{x},\mathbf{p}_{1}\right) \times \nabla d\left( 
\mathbf{x},\mathbf{p}_{2}\right) =0
\end{equation*}%
which implies that 
\begin{equation*}
\nabla d\left( \mathbf{x},\mathbf{p}_{2}\right) =\lambda \nabla d\left( 
\mathbf{x},\mathbf{p}_{1}\right)
\end{equation*}%
and since they have unit norm 
\begin{equation*}
\left\Vert \nabla d\left( \mathbf{x},\mathbf{p}_{1}\right) \right\Vert
=\left\Vert \nabla d\left( \mathbf{x},\mathbf{p}_{2}\right) \right\Vert =1
\end{equation*}%
we obtain $\lambda =\pm 1.$ Now, if $\lambda =1$ then, 
\begin{equation*}
\nabla d\left( \mathbf{x},\mathbf{p}_{1}\right) =\nabla d\left( \mathbf{x},%
\mathbf{p}_{2}\right)
\end{equation*}%
and therefore%
\begin{equation*}
d\left( \mathbf{x},\mathbf{p}_{2}\right) =d\left( \mathbf{x},\mathbf{p}%
_{1}\right) +k.
\end{equation*}%
Choosing $\mathbf{x}$ as the midpoint of the geodesic combining $\mathbf{p}%
_{1}$ and $\mathbf{p}_{2}$ implies that $k=0$ on the surface, and therefore $%
\mathbf{p}_{2}=$ $\mathbf{p}_{1}\in \Gamma \left( \mathbf{p}_{1}\right) $
which contradicts our assumption. On the other hand if $\lambda =-1,$ then 
\begin{equation*}
\nabla d\left( \mathbf{x},\mathbf{p}_{1}\right) =-\nabla d\left( \mathbf{x},%
\mathbf{p}_{2}\right)
\end{equation*}%
and therefore%
\begin{equation*}
d\left( \mathbf{x},\mathbf{p}_{1}\right) +d\left( \mathbf{x},\mathbf{p}%
_{2}\right) =k
\end{equation*}%
Setting $\mathbf{x}=\mathbf{p}_{1}$ implies that $k=d\left( \mathbf{p}_{1},%
\mathbf{p}_{2}\right) ,$ and the condition becomes 
\begin{equation*}
d\left( \mathbf{x},\mathbf{p}_{1}\right) +d\left( \mathbf{x},\mathbf{p}%
_{2}\right) =d\left( \mathbf{p}_{1},\mathbf{p}_{2}\right) .
\end{equation*}%
By triangle inequality, this means that the points $\mathbf{x},\mathbf{p}%
_{1},\mathbf{p}_{2}$ lie on the same geodesic for all $\mathbf{x}$ on the
level surface. Hence, $\nabla d\left( \mathbf{x},\mathbf{p}_{1}\right)
=-\nabla d\left( \mathbf{x},\mathbf{p}_{2}\right) $ is possible only if
every geodesic through $\mathbf{p}_{1}$ contains $\mathbf{p}_{2},$ and
therefore $\mathbf{p}_{2}\in \Gamma \left( \mathbf{p}_{1}\right) $ which
again contradicts to our assumption. Namely, distance functions defined by
these two points specified above are functionally independent. Since the
gradient of these two functions span the tangent plane at any point,
gradient of any third function obtained in this way will be linearly
dependent on the previous two, therefore it will be a function of $H_{1}$
and $H_{2}$.
\end{proof}

To sum up, if we take $\mathbf{X}\left( n\left( \sigma _{i}\right) ,b\left(
\sigma _{i}\right) \right) $ to be the gradient flow for $i-th$ Hamiltonian
function on the potential surface, we get%
\begin{equation*}
\xi =\frac{dn}{d\sigma }=\partial _{n}H_{i}\text{, \ \ }\eta =\frac{db}{%
d\sigma }=\partial _{b}H_{i},\text{ \ \ \ \ }i=1,2.
\end{equation*}%
together with 
\begin{equation*}
\left\Vert \nabla H_{i}\right\Vert ^{2}=\left( \partial _{n}H_{i}\right)
^{2}+\left( \partial _{b}H_{i}\right) ^{2}=1
\end{equation*}%
Furthermore, from Eqs.(\ref{j1j2}) and (\ref{coco}) we can identify the
partial derivatives of Hamiltonian function and obtain%
\begin{equation}
\partial _{b}H_{i}=\mu _{i}\partial _{n}H_{i}  \label{e153}
\end{equation}%
or $\eta =\mu _{i}\xi $ and $\eta ^{2}+\xi ^{2}=1.$ Then, the gradient of
Hamiltonian functions on potential surface become two different
representations of the unit tangent vector of $\mathbf{X}\left( n\left(
\sigma \right) ,b\left( \sigma \right) \right) $ 
\begin{equation}
\mathbf{T}=\nabla H_{i}=\frac{1}{\sqrt{1+\mu _{i}^{2}}}\left( \mathbf{n}+\mu
_{i}\mathbf{b}\right) =\frac{\mathbf{J}_{i}}{\left\Vert \mathbf{J}%
_{i}\right\Vert },\qquad i=1,2.  \label{e154}
\end{equation}%
The integrability condition 
\begin{equation}
\partial _{n}\left( \partial _{b}H_{i}\right) -\partial _{b}\left( \partial
_{n}H_{i}\right) +\mathcal{H}_{tn}\partial _{n}H+\mathcal{H}_{tb}\partial
_{b}H=0  \label{e155}
\end{equation}%
of Eq.(\ref{e153}) 
\begin{equation}
\kappa _{g}=\frac{1}{\left( 1+\mu ^{2}\right) ^{\frac{3}{2}}}\left( \partial
_{n}\mu +\mu \partial _{b}\mu +\left( 1+\mu ^{2}\right) \left( \mathcal{H}%
_{tn}+\mu \mathcal{H}_{tb}\right) \right) =0  \label{e156}
\end{equation}%
is the vanishing geodesic curvature described in Eq.(\ref{gcurv}).

Note that the Riccati equation (\ref{ricca}) describing $\mu \left(
s,n,b\right) $ determines the partial derivative with respect to $s$
variable and allows an arbitrary dependence on $n$ and $b$ variables, while
the vanishing geodesic curvature condition depends only on the derivatives
with respect to $n$ and $b.$ Therefore, the choice of geodesic distances as
Hamiltonian functions puts a restriction on the arbitrariness admitted by
Eq.(\ref{ricca}).

\subsection{Local extension of Hamiltonian functions on potential surfaces}

On any potential surface, $F\left( \mathbf{x}\right) =c,$ we had two
gradient vectors $\nabla H_{1},\nabla H_{2}$ and the unit normal of the
surface $\mathbf{t}$. One can extend these Hamiltonian functions on
potential surfaces to functions on $%
\mathbb{R}
^{3}$ in infinitely many ways. Geometrically this amounts to embedding
geodesic curves to surfaces in $%
\mathbb{R}
^{3},$ which are the potential surfaces of Hamiltonian functions of
dynamical system (\ref{gradsys}) under consideration. These surfaces must
contain integral curves of Eq.(\ref{gradsys}). In other words, tangent plane
of surface obtained by extension of $\nabla H_{i}$ must contain all vectors
perpendicular to each $\nabla H_{i}$ in $%
\mathbb{R}
^{3},$ in particular, $\mathbf{t}$ and $\nabla H_{i}\times \mathbf{t.}$ In
fact, since $\mathbf{t}$ and $\nabla H_{i}\times \mathbf{t}$ are linearly
independent by definition, they span a two dimensional plane. The following
proposition proves that they always integrate to potential surface of the
corresponding Hamiltonian function. For convenience we will assume that Eq.(%
\ref{gradsys}) is of the form 
\begin{equation}
\mathbf{v}\left( \mathbf{x}\right) =\mathbf{\nabla }F\left( \mathbf{x}%
\right) =\mathbf{J}_{1}\left( \mathbf{x}\right) \times \nabla H_{2}\left( 
\mathbf{x}\right) =\mathbf{J}_{2}\left( \mathbf{x}\right) \times \nabla
H_{1}\left( \mathbf{x}\right)  \label{e157}
\end{equation}%
of a bi-Hamiltonian system, where%
\begin{equation}
\mathbf{J}_{1}=\phi \nabla H_{1}\text{, \ \ \ }\mathbf{J}_{2}=-\phi \nabla
H_{2}  \label{e158}
\end{equation}%
and the conformal factor is given by 
\begin{equation}
\phi =\frac{A_{1}A_{2}(\mu _{2}-\mu _{1})}{||\mathbf{v}||}.  \label{e158a}
\end{equation}

\begin{proposition}
\begin{equation}
\left[ \frac{1}{||\mathbf{v}||}\mathbf{t\cdot }\nabla \mathbf{,}\frac{1}{||%
\mathbf{v}||}\left( \mathbf{J}_{i}\times \mathbf{t}\right) \cdot \nabla %
\right] =\mathbf{0,}\qquad i=1,2  \label{holonomy}
\end{equation}
\end{proposition}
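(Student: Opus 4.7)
The plan is to recognize that the vanishing bracket in (\ref{holonomy}) is equivalent to the statement that the two vector fields
\begin{equation*}
X := \tfrac{1}{||\mathbf{v}||}\,\mathbf{t}\cdot\nabla, \qquad Y_i := \tfrac{1}{||\mathbf{v}||}\,(\mathbf{J}_i\times\mathbf{t})\cdot\nabla
\end{equation*}
commute as derivations, and to establish this by exhibiting both as constant-scalar multiples of coordinate vector fields in the natural chart $(F, H_1, H_2)$ on $\mathbb{R}^3$. First I would verify that $(F, H_1, H_2)$ forms a local coordinate system around any point where $\mathbf{v}\neq\mathbf{0}$: one has $\nabla F = ||\mathbf{v}||\,\mathbf{t}$, and by (\ref{coco}) together with (\ref{e158}) the gradients $\nabla H_1$ and $\nabla H_2$ are proportional to $\mathbf{n}+\mu_1\mathbf{b}$ and $\mathbf{n}+\mu_2\mathbf{b}$ respectively. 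Since $\mu_1\neq\mu_2$ for the two distinct solutions of the Riccati equation (\ref{ricca}), the three gradients are linearly independent, so the chart is well defined.

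Next I would evaluate $X$ and $Y_i$ on each of the three coordinate functions. Using $\mathbf{t}\cdot\nabla F = ||\mathbf{v}||$ and the identities (\ref{e11}) together with $\nabla H_k\parallel\mathbf{J}_k$, I obtain $X(F)=1$ and $X(H_k)=0$, so $X=\partial/\partial F$ in the chart. For $Y_i$, the identity $(\mathbf{J}_i\times\mathbf{t})\cdot\mathbf{t}=0$ yields $Y_i(F)=0$, and the fact that $\mathbf{J}_i\times\mathbf{t}$ is perpendicular to $\mathbf{J}_i\parallel\nabla H_i$ yields $Y_i(H_i)=0$. The decisive computation is $Y_i(H_j)$ for $j\neq i$: invoking the bi-Hamiltonian identity (\ref{e157}) in the form $\mathbf{v}=\mathbf{J}_i\times\nabla H_j$ and the scalar triple product,
\begin{equation*}
(\mathbf{J}_i\times\mathbf{t})\cdot\nabla H_j = \tfrac{1}{||\mathbf{v}||}\det(\mathbf{J}_i,\mathbf{v},\nabla H_j) = -\tfrac{1}{||\mathbf{v}||}\,\mathbf{v}\cdot(\mathbf{J}_i\times\nabla H_j) = -||\mathbf{v}||,
\end{equation*}
so $Y_i(H_j)=-1$, a constant. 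Hence $Y_i=-\partial/\partial H_j$ in the chart $(F,H_1,H_2)$.

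Because both $X$ and $Y_i$ are, up to a constant scalar, coordinate vector fields in the chart $(F,H_1,H_2)$, their Lie bracket vanishes identically, which is precisely (\ref{holonomy}). The main obstacle I anticipate is the sign bookkeeping in the scalar triple product step, since the bi-Hamiltonian pairings (\ref{e11a}), the formulas for the conserved covariants (\ref{coco}), and the conformal factor convention (\ref{e158a}) interact with compensating signs; once these are reconciled, the identity $\mathbf{v}\cdot\mathbf{v}=||\mathbf{v}||^2$ delivers the required constant value, and the rest of the argument is essentially automatic from the commutativity of partial derivatives.
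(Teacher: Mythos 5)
Your proof is correct, but it follows a genuinely different route from the paper. The paper works entirely in the Frenet--Serret frame: it computes the general bracket $\left[ \mathbf{t}\cdot\nabla ,\mathbf{w}\cdot\nabla \right]$ for $\mathbf{w}=\xi\mathbf{n}+\eta\mathbf{b}$, specializes to $\nabla H$ and then to $\nabla H\times\mathbf{t}$, uses the Riccati equation (\ref{e159}) and the conformal-factor equation (\ref{e160}) to show that scaling by $\phi/||\mathbf{v}||$ cancels the $(\nabla\cdot\mathbf{t})$ term in (\ref{ortgrad}), and finally produces the integrating factor $\alpha=1/||\mathbf{v}||$ for the remaining $\mathcal{H}_{bt}$ term, using $\mathcal{H}_{nt}=0$ (valid for gradient systems by Proposition 1) and $||\mathbf{v}||=\partial_sF(s)$. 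You instead straighten both fields in the chart $(F,H_1,H_2)$: since $X(F)=1$, $X(H_k)=0$, $Y_i(F)=Y_i(H_i)=0$ and $Y_i(H_j)$ is a nonzero constant by the triple-product/Lagrange-identity computation, $X$ and $Y_i$ are constant-coefficient coordinate fields and hence commute. This is shorter and conceptually transparent, and it directly explains the remark after the paper's proof that the two flows furnish coordinates on $H_i=c$; what the paper's frame computation buys is that the normalizations $1/||\mathbf{v}||$ and $\phi/||\mathbf{v}||$ emerge from the structure equations themselves and that the gradient-system hypothesis enters visibly through $\mathcal{H}_{nt}=0$. Two caveats, neither fatal: your argument presupposes that $H_1,H_2$ exist and are, together with $F$, functionally independent on a three-dimensional neighborhood --- but this is exactly the ``for convenience'' assumption (\ref{e157})--(\ref{e158a}) under which the paper itself proves the proposition, so you assume no more than it does; and the sign in (\ref{coco}) versus (\ref{e157}) is internally inconsistent in the paper (the explicit formulas actually give $\mathbf{J}_i\times\nabla H_j=-\mathbf{v}$), so your value $Y_i(H_j)=-1$ may flip to $+1$, which, as you anticipated, changes nothing since only the constancy of $Y_i(H_j)$ is needed for the bracket to vanish.
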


\begin{proof}
Let $\mathbf{w}=\xi \mathbf{n}+\eta \mathbf{b}$ be a vector field in the
tangent plane of a potential surface of the function $F.$ Then 
\begin{equation}
\begin{array}{lll}
\left[ \mathbf{t\cdot }\nabla \mathbf{,w\cdot }\nabla \right] & = & -%
\mathcal{H}_{bt}\xi \mathbf{t\cdot }\nabla \mathbf{+}\left( \partial _{s}\xi
-\mathcal{H}_{bn}\xi +\mathcal{H}_{n}\eta \right) \mathbf{n\cdot }\nabla \\ 
&  & \mathbf{+}\left( \partial _{s}\eta -\mathcal{H}_{b}\xi +\mathcal{H}%
_{nb}\eta \right) \mathbf{b\cdot }\nabla \mathbf{.}%
\end{array}
\label{bracket}
\end{equation}%
If $\mathbf{w}=\nabla H$ for some function $H$ on the potential surface, we
get%
\begin{equation}
\begin{array}{lll}
\left[ \mathbf{t\cdot }\nabla \mathbf{,}\nabla H\mathbf{\cdot }\nabla \right]
& = & -\mathcal{H}_{bt}\partial _{n}H\mathbf{t\cdot }\nabla \mathbf{-}\left( 
\mathbf{2}\mathcal{H}_{bn}\partial _{n}H+(\mathcal{H}_{b}\mathbf{-}\mathcal{H%
}_{n})\partial _{b}H\right) \mathbf{n\cdot }\nabla \\ 
&  & \mathbf{+}\left( 2\mathcal{H}_{nb}\partial _{b}H+\left( \mathcal{H}_{n}-%
\mathcal{H}_{b}\right) \partial _{n}H\right) \mathbf{b\cdot }\nabla \mathbf{.%
}%
\end{array}
\label{gradbracket}
\end{equation}%
Using (\ref{gradbracket}), it is easy to compute 
\begin{equation}
\left[ \mathbf{t\cdot }\nabla \mathbf{,}\left( \nabla H\times \mathbf{t}%
\right) \mathbf{\cdot }\nabla \right] =-\mathcal{H}_{bt}\partial _{b}H%
\mathbf{t\cdot }\nabla \mathbf{-}\left( \nabla \cdot \mathbf{t}\right)
\left( \nabla H\times \mathbf{t}\right) \mathbf{\cdot }\nabla
\label{ortgrad}
\end{equation}%
which is sufficient to show that the space spanned by $\mathbf{t}$ and $%
\nabla H\times \mathbf{t}$ is tangent to a surface in $%
\mathbb{R}
^{3}.$ Using the equations 
\begin{gather}
\partial _{s}\mu _{i}=\mathcal{H}_{n}+\mu _{i}(\mathcal{H}_{nb}+\mathcal{H}%
_{bn})+\mu _{i}^{2}\mathcal{H}_{b}\qquad i=1,2  \label{e159} \\
\text{ }\partial _{s}\ln A_{i}=\partial _{s}\ln \left\Vert \mathbf{v}%
\right\Vert -\mu _{i}\mathcal{H}_{b}-\mathcal{H}_{nb}\qquad \text{\ \ \ \ }%
i=1,2  \label{e160}
\end{gather}%
defining the Poisson structures we obtain 
\begin{equation}
\partial _{s}\ln \left( \mu _{2}-\mu _{1}\right) =(\mathcal{H}_{nb}+\mathcal{%
H}_{bn})+\left( \mu _{2}+\mu _{1}\right) \mathcal{H}_{b}  \label{e161}
\end{equation}%
\begin{equation}
\partial _{s}\ln \frac{A_{i}}{\left\Vert \mathbf{v}\right\Vert }=-\mu _{i}%
\mathcal{H}_{b}-\mathcal{H}_{nb}.  \label{e162}
\end{equation}%
Eqs.(\ref{e161}) and (\ref{e162}) when used in Eq.(\ref{e158a}) result in%
\begin{equation}
\partial _{s}\ln \frac{\phi }{||\mathbf{v}||}=\mathcal{H}_{bn}-\mathcal{H}%
_{nb}\text{ }\mathbf{=}\text{ }\nabla \cdot \mathbf{t}  \label{e163}
\end{equation}%
for the divergence of $\mathbf{t.}$ This leads, with Eq.(\ref{ortgrad}), to 
\begin{equation}
\left[ \mathbf{t\cdot }\nabla \mathbf{,}\frac{\phi }{||\mathbf{v}||}\left(
\nabla H_{i}\times \mathbf{t}\right) \mathbf{\cdot }\nabla \right] =-\frac{%
\phi }{||\mathbf{v}||}\mathcal{H}_{bt}\partial _{b}H_{i}\mathbf{t\cdot }%
\nabla  \label{e164}
\end{equation}%
By Eq.(\ref{e162}) we have 
\begin{equation}
\partial _{n}H_{1}=-\frac{A_{1}}{\phi },\text{ \ \ }\partial _{b}H_{1}=-%
\frac{A_{1}\mu _{1}}{\phi },\text{ \ \ }\partial _{n}H_{2}=\frac{A_{2}}{\phi 
},\text{ \ \ }\partial _{b}H_{1}=\frac{A_{2}\mu _{2}}{\phi }  \label{e165}
\end{equation}%
and we can rewrite (\ref{e164}) in the following form:%
\begin{equation}
\left[ \mathbf{t\cdot }\nabla \mathbf{,}\frac{A_{i}}{||\mathbf{v}||}\left(
\mu _{i}\mathbf{n}-\mathbf{b}\right) \mathbf{\cdot }\nabla \right] =-\frac{%
A_{i}\mu _{i}}{||\mathbf{v}||}\mathcal{H}_{bt}\mathbf{t\cdot }\nabla
\label{e184}
\end{equation}%
Now, for any vector field $\mathbf{u}$ satisfying 
\begin{equation}
\left[ \mathbf{t\cdot }\nabla \mathbf{,u\cdot }\nabla \right] =\lambda 
\mathbf{t\cdot }\nabla  \label{e185}
\end{equation}%
we can find an integrating factor $\alpha $ such that%
\begin{equation}
\left[ \alpha \mathbf{t\cdot }\nabla \mathbf{,u\cdot }\nabla \right] =\left(
\alpha \lambda -\mathbf{u\cdot }\nabla \alpha \right) \mathbf{t\cdot }\nabla 
\mathbf{=0}  \label{e186}
\end{equation}%
and therefore 
\begin{equation}
\alpha \lambda -\mathbf{u\cdot }\nabla \alpha =0  \label{e187}
\end{equation}%
Taking 
\begin{equation}
\mathbf{u=}\frac{A_{i}}{||\mathbf{v}||}\left( \mu _{i}\mathbf{n}-\mathbf{b}%
\right) \text{ and }\lambda =-\frac{A_{i}\mu _{i}}{||\mathbf{v}||}\mathcal{H}%
_{bt}  \label{e188}
\end{equation}%
Eq.(\ref{e187}) amounts to%
\begin{equation}
\partial _{b}\alpha -\mu _{i}\partial _{n}\alpha =\mu _{i}\mathcal{H}_{bt}%
\text{ \ \ }i=1,2.  \label{e189}
\end{equation}%
Since $\mu _{2}-\mu _{1}\neq 0$ we simply get%
\begin{equation}
\partial _{n}\alpha =-\mathcal{H}_{bt},\qquad \partial _{b}\alpha =0.
\label{e190}
\end{equation}%
To find a solution to (\ref{e190}), note that, for any function $\varphi
\left( s\right) ,$ the derivative $\partial _{s}\varphi \left( s\right) $
satisfies 
\begin{eqnarray}
\partial _{n}\partial _{s}\varphi \left( s\right) &=&\mathcal{H}%
_{bt}\partial _{s}\varphi \left( s\right)  \label{e191} \\
\partial _{b}\partial _{s}\varphi \left( s\right) &=&-\mathcal{H}%
_{nt}\partial _{s}\varphi \left( s\right) .  \label{e191a}
\end{eqnarray}%
According to Proposition 1 above, $\mathcal{H}_{nt}=0$ for the cases 1 and
2bii of Frenet-Serret frames that can be applied to gradient systems.
Therefore, we have%
\begin{eqnarray}
\partial _{n}\partial _{s}\varphi \left( s\right) &=&\mathcal{H}%
_{bt}\partial _{s}\varphi \left( s\right)  \label{e192} \\
\partial _{b}\partial _{s}\varphi \left( s\right) &=&0  \notag
\end{eqnarray}%
and choosing 
\begin{equation}
\alpha =\frac{1}{\partial _{s}\varphi \left( s\right) }  \label{e193}
\end{equation}%
solves Eq.(\ref{e187}). It is possible to get a more specific solution using
Eqs.(\ref{potsur2}) and (\ref{potsur}) which state that 
\begin{equation}
||\mathbf{v}||=\partial _{s}F\left( s\right) .  \label{e194}
\end{equation}%
Therefore one may choose 
\begin{equation}
\alpha =\frac{1}{||\mathbf{v}||}  \label{e195}
\end{equation}%
which proves the proposition
\end{proof}

Above proposition proves that the arclengths of integral curves of vector
fields 
\begin{equation*}
\frac{1}{||\mathbf{v}||}\mathbf{t},\text{ }\frac{1}{||\mathbf{v}||}\mathbf{J}%
_{i}\times \mathbf{t}
\end{equation*}%
provide parametrizations and hence a local coordinate system for the
potential surface $H_{i}=c.$

\section{Examples}

\subsection{Sphere}

Let $\mathbf{r}=\left( x,y,z\right) $ and $r=\left\Vert \mathbf{r}%
\right\Vert =\sqrt{x^{2}+y^{2}+z^{2}}.$ Consider the gradient system 
\begin{equation}
\frac{d\mathbf{r}\left( t\right) }{dt}=\mathbf{r}\left( t\right)  \label{ex1}
\end{equation}%
with the potential function 
\begin{equation}
F\left( \mathbf{r}\right) =\frac{r^{2}}{2}  \label{ex2}
\end{equation}%
where level surfaces are spheres. To construct the Frenet-Serret frame, we
begin with the unit tangent vector 
\begin{equation}
\mathbf{t}=\frac{\mathbf{r}}{r}  \label{ex3}
\end{equation}%
and take normal and binormal vectors as the lines of curvature of level
surfaces of the potential function. Since level surfaces are spheres, which
are surfaces of revolution, their lines of curvatures are latitudes and
longitudes. Adapting spherical coordinates%
\begin{equation}
\begin{array}{lll}
x & = & r\sin \phi \cos \theta \\ 
y & = & r\sin \phi \sin \theta \\ 
z & = & r\cos \phi%
\end{array}
\label{ex4}
\end{equation}%
the Frenet-Serret frame can be written as 
\begin{equation}
\begin{array}{lll}
\mathbf{t} & = & \sin \phi \cos \theta \mathbf{i}+\sin \phi \sin \theta 
\mathbf{j}+\cos \phi \mathbf{k} \\ 
\mathbf{n} & = & -\sin \theta \mathbf{i}+\cos \theta \mathbf{j} \\ 
\mathbf{b} & = & \cos \phi \cos \theta \mathbf{i}+\cos \phi \sin \theta 
\mathbf{j}-\sin \phi \mathbf{k.}%
\end{array}
\label{ex5}
\end{equation}%
For the construction of Hamiltonian functions, we will fix the unit sphere.
The distance function on the unit sphere is%
\begin{equation}
d\left( P_{i},P\right) =\arccos \left( \mathbf{P}_{i}\mathbf{\cdot P}\right)
\label{ex6}
\end{equation}%
where $\mathbf{P}$ and $\mathbf{P}_{i}$ are position vectors of the points $%
P $ and $P_{i}.$ Written explicitly, if 
\begin{eqnarray}
\mathbf{P}_{i} &=&\left( \sin \phi _{i}\cos \theta _{i},\sin \phi _{i}\sin
\theta _{i},\cos \phi _{i}\right)  \label{ex7a} \\
\mathbf{Q} &=&\left( \sin \phi \cos \theta ,\sin \phi \sin \theta ,\cos \phi
\right)  \label{ex7b}
\end{eqnarray}%
then%
\begin{equation}
d\left( P,P_{i}\right) =\arccos \left( \cos \phi \cos \phi _{i}+\sin \phi
\sin \phi _{i}\cos \left( \theta -\theta _{i}\right) \right) .  \label{ex8}
\end{equation}%
Now choosing the north pole $P_{1}=\left( 0,0,1\right) $, i.e. $\phi _{1}=0$
and $\theta _{1}=0,$ as our fixed point, we get 
\begin{equation}
d\left( P_{1},P\right) =\phi  \label{ex9}
\end{equation}%
and choosing the point on the equator $P_{2}=\left( 1,0,0\right) $, i.e. $%
\phi _{2}=\pi /2$ and $\theta _{2}=0,$ the distance becomes%
\begin{equation}
d\left( P_{2},P\right) =\arccos \left( \sin \phi \cos \theta \right) .
\label{ex10}
\end{equation}%
To extend these two functions to $%
\mathbb{R}
^{3}$, we define the Hamiltonian functions%
\begin{eqnarray}
H_{1}\left( \mathbf{r}\left( t\right) \right) &=&H_{1}\left( \frac{\mathbf{r}%
\left( t\right) }{r\left( t\right) }\right) =d\left( P_{1},\frac{\mathbf{r}%
\left( t\right) }{r\left( t\right) }\right) =\phi  \label{ex11a} \\
H_{2}\left( \mathbf{r}\left( t\right) \right) &=&H_{2}\left( \frac{\mathbf{r}%
\left( t\right) }{r\left( t\right) }\right) =d\left( P_{2},\frac{\mathbf{r}%
\left( t\right) }{r\left( t\right) }\right) =\arccos \left( \sin \phi \cos
\theta \right) .  \label{ex11b}
\end{eqnarray}%
These functions can be written in terms of $y/x$ and $z/x$ as%
\begin{eqnarray}
H_{1}\left( \mathbf{r}\left( t\right) \right) &=&\arccos \left( \frac{1}{%
\sqrt{\left( x/z\right) ^{2}+\left( y/x\right) ^{2}\left( x/z\right) ^{2}+1}}%
\right)  \label{ex12a} \\
H_{2}\left( \mathbf{r}\left( t\right) \right) &=&\arccos \left( \frac{x}{%
\sqrt{1+\left( y/x\right) ^{2}+\left( z/x\right) ^{2}}}\right) .
\label{ex12b}
\end{eqnarray}%
Indeed, the functions 
\begin{equation*}
h_{1}\left( x,y,z\right) =\frac{y}{x},\text{ }h_{2}\left( x,y,z\right) =%
\frac{z}{x}
\end{equation*}%
are the fundamental conserved quantities of Eq.(\ref{ex1}).

\subsection{ A Linear Poisson system}

Consider the gradient system defined by the vector field 
\begin{equation}
\mathbf{v}(\mathbf{x})=(yz,xz,xy)  \label{ex13}
\end{equation}%
which is an unphysical version of the Euler top. The potential surfaces are 
\begin{equation}
F\left( x,y,z\right) =xyz=c  \label{ex14}
\end{equation}%
As mentioned in \cite{wat}, the geodesic flows on potential surfaces for $%
c\neq 0$ are not integrable, and hence one may not expect to find simple
Hamiltonian functions for this system. However, letting $c=0$ yields 
\begin{equation}
xyz=0  \label{ex15}
\end{equation}%
which is nothing but the non-smooth union of coordinate planes. On this
surface consider the closed subset 
\begin{equation}
x\geq 0,\text{ }y\geq 0,\text{ }z=0  \label{ex16}
\end{equation}%
which is the first quadrant of $xy-$ plane. Then, the Frenet-Serret frame
becomes 
\begin{equation}
\left( \mathbf{t,n,b}\right) =\left( \mathbf{k,i,j}\right)  \label{ex17}
\end{equation}%
Choosing $P_{1}=\left( 0,0\right) $ and $P_{2}=\left( 1,0\right) $ on the $%
xy-$plane it is possible to define two distance functions%
\begin{eqnarray}
d\left( P_{1},P\right) &=&\sqrt{x^{2}+y^{2}}  \label{ex18} \\
d\left( P_{2},P\right) &=&\sqrt{\left( x-1\right) ^{2}+y^{2}}  \label{ex18a}
\end{eqnarray}%
whose gradients are%
\begin{eqnarray}
\nabla d\left( P_{1},P\right) &=&\left( \frac{x}{\sqrt{x^{2}+y^{2}}},\frac{y%
}{\sqrt{x^{2}+y^{2}}}\right)  \label{ex19} \\
\nabla d\left( P_{2},P\right) &=&\left( \frac{x-1}{\sqrt{\left( x-1\right)
^{2}+y^{2}}},\frac{y}{\sqrt{\left( x-1\right) ^{2}+y^{2}}}\right)
\label{ex19a}
\end{eqnarray}%
Using the immediate condition%
\begin{equation}
\frac{d}{dt}\left( x\left( t\right) y\left( t\right) z\left( t\right)
\right) =0  \label{ex20}
\end{equation}%
we obtain the time evolution of $z$ coordinate 
\begin{equation}
\frac{dz\left( t\right) }{dt}=-\frac{z\left( t\right) }{x\left( t\right) }%
\frac{dx\left( t\right) }{dt}-\frac{z\left( t\right) }{y\left( t\right) }%
\frac{dy\left( t\right) }{dt}.  \label{ex21}
\end{equation}%
That means, we can extend to the gradient vector fields in Eqs.(\ref{ex19})
and (\ref{ex20}) to the vector fields%
\begin{eqnarray}
\mathbf{u}_{1}\left( \mathbf{x}\right) &=&\frac{1}{\sqrt{x^{2}+y^{2}}}\left(
x,y,-2z\right)  \label{ex22} \\
\mathbf{u}_{2}\left( \mathbf{x}\right) &=&\frac{1}{\sqrt{\left( x-1\right)
^{2}+y^{2}}}\left( x-1,y,-2z+\frac{z}{x}\right)  \label{ex22a}
\end{eqnarray}%
which form a frame for the tangent space of the potential surface $xyz=0$.
Using the decomposition 
\begin{equation}
\mathbf{u}_{2}\left( \mathbf{x}\right) =\frac{1}{\sqrt{\left( x-1\right)
^{2}+y^{2}}}\left( x,y,-2z\right) +\frac{1}{x\sqrt{\left( x-1\right)
^{2}+y^{2}}}\left( -x,0,z\right)  \label{ex23}
\end{equation}%
it is possible find another frame consisting of gradient vector fields 
\begin{eqnarray}
\nabla H_{1} &=&\left( x,y,-2z\right)  \label{ex24} \\
\nabla H_{2} &=&\left( -x,0,z\right)  \label{ex24a}
\end{eqnarray}%
for the functions 
\begin{eqnarray}
H_{1}\left( x,y,z\right) &=&\frac{1}{2}\left( x^{2}+y^{2}-2z^{2}\right)
\label{ex24b} \\
H_{2}\left( x,y,z\right) &=&\frac{1}{2}\left( z^{2}-x^{2}\right)
\label{ex24c}
\end{eqnarray}%
which are Hamiltonian functions of the gradient system defined by Eq.(\ref%
{ex13}).

\subsection{The Aristotelian Model of Three Body Motion}

In \cite{ari}, we showed that the Aristotelian model of three-body motion is
defined by the gradient vector field 
\begin{equation}
\mathbf{v}(\mathbf{x})=(\frac{c}{x-y}+\frac{b}{x-z},\frac{a}{y-z}+\frac{c}{%
y-x},\frac{b}{z-x}+\frac{a}{z-y})  \label{ex25}
\end{equation}%
where the potential surfaces are 
\begin{equation}
F\left( \mathbf{x}\right) =a\ln \left( y-z\right) +b\ln \left( x-z\right)
+c\ln \left( x-y\right) =K.  \label{ex26}
\end{equation}%
We will choose the level surface defined by the constant 
\begin{equation}
K=a+b+c=1  \label{ex27}
\end{equation}%
and parametrized by 
\begin{equation}
u=\frac{x-z}{y-z}-\frac{1}{2},\text{ \ }v=y-z  \label{ex28}
\end{equation}%
so that the potential surface in new coordinates will be 
\begin{equation}
\ln v\left( u+\frac{1}{2}\right) ^{b}\left( u-\frac{1}{2}\right) ^{c}=1.
\label{ex29}
\end{equation}%
We can solve the variable $v$%
\begin{equation}
v=f\left( u\right) =\frac{e}{\left( u+\frac{1}{2}\right) ^{b}\left( u-\frac{1%
}{2}\right) ^{c}}  \label{ex30}
\end{equation}%
and obtain the relation 
\begin{equation*}
\frac{f^{\prime }\left( u\right) }{f\left( u\right) }=-\left( \frac{b}{u+%
\frac{1}{2}}+\frac{c}{u-\frac{1}{2}}\right) .
\end{equation*}%
With the parameters $\left( u,z\right) $, the potential surface becomes 
\begin{equation}
\mathbf{X}\left( u,z\right) =f\left( u\right) \left( u+\frac{1}{2}%
,1,0\right) +z\left( 1,1,1\right)  \label{ex31}
\end{equation}%
which is a ruled surface. To obtain the orthogonal parametrization, let 
\begin{equation}
\mathbf{e}_{1}=\frac{1}{\sqrt{3}}\left( 1,1,1\right) ,\text{ }\mathbf{e}_{2}=%
\frac{1}{\sqrt{6}}\left( 2,-1,-1\right) ,\text{ }\mathbf{e}_{3}=\frac{1}{%
\sqrt{2}}\left( 0,1,-1\right)  \label{ex32}
\end{equation}%
and define 
\begin{gather}
\mathbf{\alpha }\left( u\right) =\frac{\sqrt{2}uf\left( u\right) }{\sqrt{3}}%
\mathbf{e}_{2}+\frac{f\left( u\right) }{\sqrt{2}}\mathbf{e}_{3}
\label{ex34a} \\
w=\sqrt{3}z+\frac{1}{\sqrt{3}}f\left( u\right) \left( u+\frac{3}{2}\right)
\label{ex34b}
\end{gather}%
Then, we have%
\begin{equation}
\mathbf{X}\left( u,w\right) =\mathbf{\alpha }\left( u\right) +w\mathbf{e}%
_{1}.  \label{ex33}
\end{equation}%
The fundamental forms are 
\begin{eqnarray}
\left( g_{ij}\right) &=&\left( 
\begin{array}{cc}
\left\Vert \mathbf{\alpha }^{\prime }\left( u\right) \right\Vert ^{2} & 0 \\ 
0 & 1%
\end{array}%
\right)  \label{ex35} \\
\left( l_{ij}\right) &=&\frac{1}{\left\Vert \nabla F\right\Vert }\left( 
\begin{array}{cc}
\mathbf{\alpha }^{\prime \prime }\left( u\right) \cdot \nabla F & 0 \\ 
0 & 0%
\end{array}%
\right)  \label{ex36}
\end{eqnarray}%
where 
\begin{gather}
\nabla F=-\frac{\sqrt{3}f^{\prime }\left( u\right) }{\sqrt{2}f^{2}\left(
u\right) }\mathbf{e}_{2}+\frac{\sqrt{2}\left( f\left( u\right) +uf^{\prime
}\left( u\right) \right) }{f^{2}\left( u\right) }\mathbf{e}_{3}  \label{ex37}
\\
\mathbf{\alpha }^{\prime }\left( u\right) =\frac{\sqrt{2}\left( uf^{\prime
}\left( u\right) +f\left( u\right) \right) }{\sqrt{3}}\mathbf{e}_{2}+\frac{%
f^{\prime }\left( u\right) }{\sqrt{2}}\mathbf{e}_{3}.  \label{ex37a}
\end{gather}%
Note also that%
\begin{equation*}
\nabla F=\frac{\sqrt{3}}{f^{2}\left( u\right) }\mathbf{e}_{1}\times \mathbf{%
\alpha }^{\prime }\left( u\right) .
\end{equation*}%
Since the Gaussian curvature of this ruled surface is zero, the potential
surfaces are developable surfaces, and therefore can be mapped isometrically
onto the plane. We may choose the Frenet-Serret frame on this surface to be
the Darboux frame 
\begin{equation}
\left( \mathbf{t,n,b}\right) =\left( \frac{\nabla F}{\left\Vert \nabla
F\right\Vert },\mathbf{e}_{1},\frac{\mathbf{\alpha }^{\prime }\left(
u\right) }{\left\Vert \mathbf{\alpha }^{\prime }\left( u\right) \right\Vert }%
\right)  \label{e38}
\end{equation}%
of lines of curvature which are the coordinate curves $u=c_{1}$ and $%
w=c_{2}. $ Since $\nabla F,$ $\mathbf{\alpha }^{\prime }\left( u\right) $
and $\mathbf{\alpha }^{\prime \prime }\left( u\right) $ are vectors in the
plane spanned by $\mathbf{e}_{2}$ and $\mathbf{e}_{3},$ we have 
\begin{equation*}
\nabla F\cdot \left( \mathbf{\alpha }^{\prime }\left( u\right) \times 
\mathbf{\alpha }^{\prime \prime }\left( u\right) \right) =0
\end{equation*}%
and since $\mathbf{e}_{1}$ is a constant vector, it is easy to see that
these lines of curvature are also geodesics. Therefore, the coordinates $%
\left( s,n,b\right) $ are geodesic distances in $\mathbf{t,n,b}$ directions,
respectively. The metric in \ the new coordinate system becomes Euclidean.
This allows us to write 
\begin{equation}
\left( \mathbf{n,b}\right) =\left( \nabla H_{1}\left( n\right) ,\nabla
H_{2}\left( b\right) \right)  \label{ex39}
\end{equation}%
where $H_{1}$ and $H_{2}$ are geodesic distance functions in directions of $%
\mathbf{n}$ and $\mathbf{b.}$ In coordinates $\left( u,v,w\right) ,$ the
first Hamiltonian function is 
\begin{equation}
H_{1}\left( w\right) =\int_{w_{0}}^{w}\left\Vert \mathbf{e}_{1}\right\Vert
dt=w  \label{ex40}
\end{equation}%
and it is easy to see that 
\begin{equation}
w=\frac{1}{\sqrt{3}}\left( x+y+z\right) .  \label{ex41}
\end{equation}%
For the second Hamiltonian function, we have 
\begin{equation}
H_{2}\left( u\right) =\int_{u_{0}}^{\upsilon }\left\Vert \mathbf{\alpha }%
^{\prime }\left( t\right) \right\Vert dt=\int_{u_{0}}^{u}\sqrt{\frac{2}{3}%
\left( tf^{\prime }\left( t\right) +f\left( t\right) \right) ^{2}+\frac{1}{2}%
\left( f^{\prime }\left( t\right) \right) ^{2}}dt  \label{ex42}
\end{equation}%
or equivalently 
\begin{equation*}
H_{2}\left( x,y,z\right) =\sqrt{\frac{2}{3}}\int_{u_{0}}^{\frac{2x-y-z}{%
2\left( y-z\right) }}\sqrt{\left( t+\frac{f\left( t\right) }{f^{\prime
}\left( t\right) }\right) ^{2}+\frac{3}{4}}df\left( t\right) .
\end{equation*}

\section{Conclusion}

We developed differential calculus in Frenet-Serret frame. We extended the
result of \cite{eh09} for constructing Frenet-Serret frame to all dynamical
systems with the help of eigenvectors of curl operator and a result of
Chandrasekhar and Kendall in \cite{CK}. Considering bi-Hamiltonian structure
and Jacobi identity in Frenet-Serret frame associated to a dynamical system,
we proved that all dynamical systems in three dimensions possess two
compatible Poisson structures. We also presented the relation between
Hamiltonian functions and Poisson vectors.

Given a gradient dynamical system, we presented the geometric parameters of
both level surface and an arbitrary curve on it. In particular, we
considered, on level surfaces of potential function, gradient flows of
restrictions of Hamiltonian functions and proved that it is possible to find
Hamiltonian functions whose gradient flows on level surface have geodesic
curvature zero. This result led us to show that Hamiltonian functions are
determined by distance functions, namely, geodesic lenghts from an arbitrary
point to two different fixed points on the level surface of potential
function.

Finally, by means of transformations bringing one of the components of an
orthogonal metric to constant, we proved that finding two Hamiltonian
functions of a gradient system is the same as constructing geodesic
coordinates of its potential surfaces. As examples, we worked out decoupled
flow of radius vector of a sphere, a quadratic dynamical system possessing
linear Poisson structures, and the Aristotelian model of three body
motion.\bigskip

\end{document}